\numberwithin{equation}{section}
\newtheorem{proposition}{Proposition}[section]
\newtheorem{lemma}[proposition]{Lemma}
\newtheorem{theorem}[proposition]{Theorem}
\newtheorem{corollary}[proposition]{Corollary}
\theoremstyle{definition}
\newtheorem{remark}[proposition]{Remark}
\newtheorem{definition}[proposition]{Definition}
\DeclareMathOperator{\Aut}{Aut}
\DeclareMathOperator{\DF}{DF}
\DeclareMathOperator{\GIT}{GIT}
\DeclareMathOperator{\Product}{Prod}
\newcommand{\N}{\mathbb{N}}
\newcommand{\C}{\mathbb{C}}
\newcommand{\A}{\mathbb{A}}
\newcommand{\Z}{\mathbb{Z}}
\newcommand{\Q}{\mathbb{Q}}
\newcommand{\mG}{\mathbb{G}}
\newcommand{\pr}{\mathbb{P}}
\renewcommand{\epsilon}{\varepsilon}
\newcommand{\D}{\mathcal{D}}
\newcommand{\M}{\mathcal{M}}
\renewcommand{\L}{\mathcal{L}}
\newcommand{\X}{\mathcal{X}}
\newcommand{\Y}{\mathcal{Y}}
\newcommand{\U}{\mathcal{U}}
\newcommand{\W}{\mathcal{W}}
\renewcommand{\phi}{\varphi}
\newcommand\Pic{\mathrm{Pic}}
\newcommand\PGL{\mathrm{PGL}}
\newcommand\cD{\mathcal{D}}
\newcommand\cL{\mathcal{L}}
\newcommand\cO{\mathcal{O}}
\newcommand\cX{\mathcal{X}}
\renewcommand{\O}{\mathrm{O}}
\newcommand{\vast}{\bBigg@{4}}
\newcommand{\Vast}{\bBigg@{5}}
\pgfplotsset{compat=1.18}
\title{On products of K-moduli spaces}
\author{Theodoros S. Papazachariou}
\address{Isaac Newton Institute, University of Cambridge, 20 Clarkson rd, Cambridge, CB3 0EH, United Kingdom}
\email{tsp35@cam.ac.uk}
\begin{document}

\begin{abstract}
    We study the K-moduli space of products of Fano varieties in relation to the product of K-moduli spaces of the product components. We show that there exists a well-defined morphism from the product of K-moduli stacks of Fano varieties to the K-moduli stack of their product. Furthermore, we show that this morphism is an isomorphism if any two varieties with different irreducible components are non-isomorphic, and a torsor if they are. Our results rely on the theory of stacks and previous work by Zhuang. 
    
    We use our main result to obtain an explicit description of the K-moduli stack/ space of Fano threefolds with Picard rank greater than 6, along with a wall-crossing description, and a detailed polyhedral wall-crossing description for K-moduli of log Fano pairs.
\end{abstract}
\maketitle

\section{Introduction} \label{sec:intro}

In recent years, K-stability has found remarkable success in constructing projective moduli spaces for Fano varieties. In particular, due to recent innovations in moduli theory via stacks, it was shown that there exists an Artin stack $\M^K$ which parametrises K-semistable Fano varieties, or log Fano pairs. This stack admits a good moduli space $M^K$ which in turn parametrises K-polystable Fano varieties, or K-polystable log Fano pairs. The K-moduli space $M^K$ is proper and projective \cite{jiang, codogni, Li-Wang-Xu, blum_halpern-leistner_liu_xu_2021, xu_valuations, blum2021openness, xu-zhuang, xu2020uniqueness, alper_reductivity, liu2021finite}, and hence its construction answers positively the question of the existence of a moduli space of Fano varieties, which had been a long-standing open problem. We prompt the reader to the survey \cite{xu2020kstability} for an excellent exposition on this topic.

The construction of these spaces is not explicit, and their complete description has only been achieved in few examples \cite{liu-xu, liu2020kstability, spotti_sun_2017, pap22, abban2023onedimensional}. However, there exist general methods \cite{Dervan_2016,Zhuang_2020} that allow us to determine the K-stability of specific Fano varieties by relating it to the K-stability of others. In particular, it was shown by Zhuang \cite{Zhuang_2020} (see Theorem \ref{zhuang_thm}) that a Fano variety $X = X_1\times X_2$ is K-(semi/poly)stable if and only if $X_1$ and $X_2$ are K-(semi/poly)stable. 

%Although this result does not give us the full description of the K-moduli space of $X$, since in theory there can be K-polystable degenerations which do not appear as products $X_1\times X_2$, it gives us a very comprehensive tool in order to detect K-stability. 

In this paper, we the above result to show that K-semistable degenerations of products in reduced connected components of the K-moduli stack $\M^K_{n,V}$ of Fano varieties of volume $V$ and dimension $n$ are also products. We also show that the universal family over a connected component that contains a variety which is a product, also splits into products. Thus, our first main result is the following:

\begin{theorem}\label{intro_thm: optimal result}[See Theorem \ref{optimal result} and Corollary \ref{max_optimal_result}]
    Let $\M$ be a connected component of $\M^K_{n,V}$. If there exists one point $[X]\in \M$, such that $X\cong X_1\times X_2$ is a product, then so is every point of $\M$. Moreover, after passing through an {\'e}tale cover, the universal family over $\M$ splits as a fibre product.
\end{theorem}

We also show that the connected components of the corresponding K-moduli stacks $\M^K_{X_1}\times \M^K_{X_2}$ and $\M^K_{X_1\times X_2}$, that contain $X_i$ and $X_1\times X_2$, respectively,  are isomorphic when $X_1$ and $X_2$ do not have the same simple components in their product decompositions. In particular, for any $X_1$, $X_2$ we show that by taking products $X = X_1\times X_2$ there exists a well-defined map 
$$\Product\colon\M^K_{X_1}\times \M^K_{X_2}\longrightarrow\M^K_{X}.$$

We then prove the following:

\begin{theorem}\label{intro_thm1}[See Theorems \ref{main_thm} and \ref{map is etale}]
    Let $X_1\not\cong X_2$ be two K-semistable Fano varieties with different simple components in their product decompositions. Then, the map $\Product$ is an isomorphism of stacks which descends to an isomorphism of good moduli spaces. If $X_1\cong X_2$ is simple, the map $\Product$ is an $S_2$-torsor. In general, the map $\Product$ is always {\'e}tale.
\end{theorem}

% Our main result is the following:

% \begin{theorem}\label{intro_thm1}[See Theorems \ref{main_thm} and \ref{map is etale}]
%     Let $X_1\not\cong X_2$ be two K-semistable Fano varieties with different simple components in their product decompositions. Then, the map $\Product$ is an isomorphism of stacks which descends to an isomorphism of good moduli spaces. If $X_1\cong X_2$ is simple, the map $\Product$ is an $S_2$-torsor. In general, the map $\Product$ is always {\'e}tale.
% \end{theorem}

We further extend Theorem \ref{intro_thm1} by proving that the map $\Product$ defined for K-moduli stacks and K-moduli spaces of products of log Fano pairs $(X_1,c_1D_1)$ and $(X_2,c_2D_2)$ is also an isomorphism (Theorem \ref{product thm log pairs}). It should be noted, that for KSBA moduli spaces, similar results were shown in \cite{Bhatt_Ho_Patakfalvi_Schnell_2013}, relying on techniques from derived algebraic geometry. The key argument in \cite{Bhatt_Ho_Patakfalvi_Schnell_2013} is a general result on deformations of products of Deligne-Mumford stacks, with the assumption that these stacks lack infinitesimal automorphisms. In particular, they show that the deformation functor of a product of Deligne-Mumford stacks is equivalent to the product of the two corresponding deformation functors. 
% In our setting, a general result on the product of deformation functors of Artin stacks, even with certain conditions, seems infeasible, although some prior partial results do exist \cite[Proposition 4.1]{kaloghiros-petracci}. Unfortunately, this approach does not seem to extend for Fano varieties due to the existence of Fano varieties with infinitesimal automorphisms.  

% Instead, we use different methods that differ widely from \cite{Bhatt_Ho_Patakfalvi_Schnell_2013}, as they are stack theoretic in nature. In particular, for Theorem \ref{intro_thm: optimal result}, when $X_1\not\cong X_2$, we show that the map $\Product$ is finite. 
For the key statement, relating the deformation theory of K-semistable products to the deformation theories of the K-semistable product components, we use the work of Zhuang \cite[Lemma 2.17]{Zhuang_2020}. This result contains the analogue of the key points of \cite{Bhatt_Ho_Patakfalvi_Schnell_2013}, which allows us to show that the map $\Product$ is open, and hence an isomorphism by Zariski's main theorem. The above result and method only applies to connected components of the K-moduli stack $\M^K_{n,V}$.

Furthermore, by considering $\Q$-Gorenstein deformations of $\Q$-Fano varieties, we extend a result by Kaloghiros--Petracci \cite[Proposition 4.1]{kaloghiros-petracci}, as well as the key methods in \cite[Theorem 3.3]{Bhatt_Ho_Patakfalvi_Schnell_2013} to show that the $\Q$-Gorenstein  deformation functor of a product is isomorphic to the product of the $\Q$-Gorenstein deformation functors of its components (see Theorem \ref{petracci extension-full}). The $\Q$-Gorenstein deformation functor can be thought as the functor of flat deformations from the canonical stack of a scheme, which is a DM stack, and allows us to use the approach of \cite{Bhatt_Ho_Patakfalvi_Schnell_2013}. Thus, effectively show that $\Q$-Gorenstein deformations of products of klt Fanos decompose into products of  $\Q$-Gorenstein deformations of the components. We combine this result with the main method of proof of \cite[Lemma 2.17]{Zhuang_2020} to prove our main result, Theorem \ref{intro_thm: optimal result}.

The above results allow us to describe fully the K-moduli spaces Fano threefolds of Picard rank $\geq 6$ in Section \ref{sec:Fano threefolds moduli}. These are threefolds $X\times Y_{9-n}$, where $X = \pr^1$ and $Y_{9-n} = \mathrm{Bl}_{n}\pr^2$ is the del Pezzo surface of degree 1, 2, 3, 4. The Fano threefold $X\times Y_{9-n}$ is a smooth member of family \textnumero 7.1, 8.1, 9.1 and 10.1 when $n = 5$, $6$, $7$, $8$ respectively. We obtain a full description of these K-moduli spaces in relation to specific GIT quotients that have been well-studied in previous literature, using Theorem \ref{intro_thm1} and the results of \cite{mabuchi_mukai_1990,odaka_spotti_sun_2016}. We also combine our results with the results of \cite{Gallardo_2020, pap22,me_junyan_jesus} to obtain explicit K-moduli wall-crossing descriptions of log Fano pairs $(X\times Y_{9-n}, cD)$, where $D\sim -K_{X\times Y_{9-n}}$, for $n = 5$, $6$, $7$.  

Furthermore, in Section \ref{sec: pairs examples}, we provide examples of higher dimensional wall-crossings for log Fano pairs. In \cite{zhou2023shape} the author showed that for log Fano pairs $(X, \sum_{i=1}^kc_iD_i)$, where $D_i\cong_{\Q}-K_X$ and $c_i\in \Q$, there exists a finite wall-chamber decomposition of the K-moduli stack $\M^K_{X, \sum_{i=1}^kc_iD_i}$ (c.f. \cite[Theorem 1.6]{zhou2023shape}). Until now, the only example of such wall crossings is due to Fujita \cite{Fujita_2021_hyp}, who studied log Fano hyperplane arrangements. Combining the extended version of Theorem \ref{intro_thm1} for log Fano pairs, and the results of \cite{Gallardo_2018, Gallardo_2020, pap22, me_junyan_jesus}, we provide a new example of such wall-crossings. In particular, we let $X_1\subset \pr^3$ be a del Pezzo surface of degree $3$, and $X_2\subset \pr^4$ be a del Pezzo surface of degree $4$, with hyperplane sections $D_i\sim -K_{X_i}$. We consider the product $X = X_1\times X_2$, and the divisor $\mathbf{c}D = c_1D_1\boxtimes c_2D_2 = c_1p_1^*D_1+c_2p_2^*D_2$, where $p_i\colon X\rightarrow X_i$ are the projections. Then, we show the following:

\begin{theorem}\label{product_pairs intro}
For all $c_1$, $c_2$, $\mathbf{c} = (c_1,c_2)$ we have
    $$\M^K_{X, \mathbf{c}D}\cong \M^K_{X_1,D_1,c_1}\times \M^K_{X_2,D_2,c_2}\cong \M^{\GIT}_3(t(c_1))\times \M^{\GIT}_4(t(c_2))$$
    and
    $$M^K_{X, \mathbf{c}D}\cong M^K_{X_1,D_1,c_1}\times M^K_{X_2,D_2,c_2}\cong M^{\GIT}_3(t(c_1))\times M^{\GIT}_4(t(c_2)).$$
%    In particular, there are 36 total wall chamber decompositions, as illustrated in Figure \ref{fig:wall-chamber}.
\end{theorem}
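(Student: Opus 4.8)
The plan is to combine the two results that the theorem explicitly invokes: the extended version of Theorem \ref{intro_thm1} for log Fano pairs (namely Theorem \ref{product thm log pairs}), which gives the product decomposition of K-moduli of log Fano pairs, together with the known identifications of the individual K-moduli spaces of the degree 3 and degree 4 del Pezzo surfaces with appropriate GIT quotients established in \cite{Gallardo_2018, Gallardo_2020, pap22, me_junyan_jesus}. First I would verify the hypotheses needed to apply the log-pair version of the product theorem to $(X_1, c_1 D_1)$ and $(X_2, c_2 D_2)$: since $X_1 \subset \pr^3$ is a cubic surface and $X_2 \subset \pr^4$ is a degree $4$ del Pezzo surface, the two pairs are visibly non-isomorphic (their underlying Fano surfaces have different anticanonical degree), so the torsor case never arises and the product map is an isomorphism of stacks descending to an isomorphism of good moduli spaces, uniformly in all $(c_1,c_2)$.

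Next I would record the two one-dimensional wall-crossing identifications separately. For each $i$, the results of \cite{Gallardo_2018, Gallardo_2020} for the cubic surface and of \cite{pap22, me_junyan_jesus} for the degree $4$ del Pezzo surface identify the K-moduli stack $\M^K_{X_i, D_i, c_i}$ with a GIT stack $\M^{\GIT}_{d_i}(t(c_i))$, where $d_i \in \{3,4\}$ and $t(c_i)$ is the GIT linearisation parameter corresponding to the coefficient $c_i$ via the wall-crossing dictionary in those references; these identifications descend to isomorphisms $M^K_{X_i,D_i,c_i}\cong M^{\GIT}_{d_i}(t(c_i))$ of good moduli spaces. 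I would state explicitly the function $c_i \mapsto t(c_i)$ so that the claimed isomorphism is unambiguous, and note that both sides are defined for every admissible value of $c_i$ in the relevant interval, so the correspondence holds for all $c_1, c_2$ as claimed.

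The proof then consists of chaining these isomorphisms: applying the log-pair product theorem gives
$$\M^K_{X, \mathbf{c}D}\cong \M^K_{X_1,D_1,c_1}\times \M^K_{X_2,D_2,c_2},$$
and substituting the two GIT identifications yields the isomorphism with $\M^{\GIT}_3(t(c_1))\times \M^{\GIT}_4(t(c_2))$; the same substitutions applied at the level of good moduli spaces give the second display, using that taking good moduli spaces commutes with finite products (which is already part of the content of Theorem \ref{product thm log pairs}). I expect the main obstacle to be bookkeeping rather than conceptual: one must confirm that the product of the divisor classes $\mathbf{c}D = c_1 p_1^* D_1 + c_2 p_2^* D_2$ is exactly the polarisation to which the log-pair product theorem applies, i.e. that the boundary of the product pair decomposes as the exterior sum of the two boundaries with matching coefficients, and that the GIT parameter $t(c_i)$ produced by the individual surface references is compatible across the whole range of $(c_1,c_2)$ (including at walls, where one should check the two descriptions agree on the polystable replacements). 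Once these compatibilities are checked, the isomorphisms are functorial and the result follows formally.
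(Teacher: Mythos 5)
Your proposal is correct and follows essentially the same route as the paper: the paper's (implicit) proof is exactly to combine Theorem \ref{product thm log pairs} with the identifications $\M^K_{X_i,D_i,c_i}\cong \M^{\GIT}_{d_i}(t(c_i))$ of Theorems \ref{jesus+patricio}, \ref{my_work} and \ref{thm: junyan-me-jesus}, and then chain the isomorphisms at the level of stacks and of good moduli spaces. Your extra care in checking the hypothesis $(X_1,c_1D_1)\not\cong(X_2,c_2D_2)$ (via the distinct anticanonical degrees) and the compatibility at walls (which is handled by the statement in Theorem \ref{thm: junyan-me-jesus} that the isomorphisms commute with wall-crossing morphisms) only makes explicit what the paper leaves implicit.
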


Here $\M^{\GIT}_3(t(c_1))$ and $\M^{\GIT}_4(t(c_2))$ are the GIT moduli stacks parametrising GIT semistable pairs of cubics and hyperplanes in $\pr^3$, and complete intersections of two quadrics and hyperplanes in $\pr^4$ which have been studied in \cite{Gallardo_2018} and \cite{pap22}, respectively. In particular, using the results of \cite{Gallardo_2018,pap22}, which we detail in Section \ref{sec: prior results}, we obtain the explicit description of 36 ``rectangular'' wall-crossings.

\subsection{Organisation of the paper}
In Section \ref{sec:prelims} we provide expository details about K-stability, K-moduli spaces and K-moduli stacks and families of Fano varieties, as well as a structure theorem for $\Q$-Fano varieties. In Section \ref{sec:product components} we prove some properties of $\Q$-Gorenstein deformations of products of $\Q$-Fano varieties, and we prove Theorem \ref{intro_thm: optimal result}. In Section \ref{sec:product map} we prove some results regarding product families, and we construct the map $\Product$ explicitly. In Section \ref{sec: properties of prod}, we prove some general results regarding products of quotient stacks and good moduli spaces, and we prove Theorem \ref{intro_thm1}. In Section \ref{sec: examples} we provide the aforementioned examples for Fano threefolds, and log Fano pairs (Theorem \ref{product_pairs intro}).

\subsection*{Acknowledgments}
I would like to thank George Cooper, Ruadha{\'i} Dervan, Andr{\'e}s Iba{\~ n}ez N{\'u}{\~ n}ez and Andrea Petracci for advice, suggestions on how to improve this draft and very useful comments. I would especially like to thank Yuchen Liu for his many useful comments that have improved this paper and for his suggestions in strengthening the results of this paper. I would also like to thank the Isaac Newton Institute for Mathematical Sciences, Cambridge, for support and hospitality during the programme ``New equivariant methods in algebraic and differential geometry'' where work on this paper was undertaken. This work was supported by EPSRC grant EP/R014604/1.

\section{Preliminaries}\label{sec:prelims}

Throughout, we work over the field of characteristic zero $\C$.

\subsection{K-stability and K-moduli spaces}

\begin{definition}
Let $X$ be a normal projective variety, and $D$ be an effective $\Q$-divisor on $X$. Then the pair $(X,D)$ is called a \textup{log Fano pair} if $-(K_X+D)$ is an ample $\Q$-Cartier divisor. A normal projective variety $X$ is called a \textup{$\Q$-Fano variety} if $(X,0)$ is a Kawamata log terminal (\textup{klt}) log Fano pair.
\end{definition}

\begin{definition}
    Let $(X,D)$ be an n-dimensional log Fano pair, and $L$ an ample line bundle on $X$ which is $\Q$-linear equivalent to $-k(K_X+D)$ for some positive integer number $k\in Z$. Then a \textup{normal test configuration} $(\X,\D;\cL)$ of $(X,D;L)$ consists of 
\begin{itemize}
    \item a normal projective variety $\X$ with a flat projective morphism $\pi:\X\rightarrow \A^1$;
    \item a line bundle $\cL$ ample over $\A^1$;
    \item a $\mG_m$-action on the polarised variety $(\X,\cL)$ such that $\pi$ is $\mG_m$-equivariant, where $\A^1$ is equipped with the standard $\mG_m$-action;
    \item the restriction $(\X\setminus \X_0;\cL|_{\X\setminus \X_0})$ is isomorphic to $(X;L)\times (\A^1\setminus\{0\})$ $\mG_m$-equivariantly;
    \item an effective $\Q$-divisor $\D$ on $\X$ such that $\D$ is the Zariski closure of $D\times (\A^1\setminus\{0\})$ in $\X$, under the identification between $\X\setminus \X_0$ and $X\times (\A^1\setminus\{0\})$.
\end{itemize}

A test configuration is called a
\begin{enumerate}
    \item \textup{product test configuration} if $$(\X,\D;\cL)\simeq (X\times \A^1,D\times \A^1;p_1^{*}L\otimes \cO_{\X}(l\X_0))$$ for some $l\in \Z$;
    \item \textup{trivial test configuration} if it is a product test configuration and the isomorphism above is $\mathbb{G}_m$-equivariant where $X$ has the trivial $\mathbb G_m$-action;
    \item \textup{special test configuration} if $\cL\sim_{\Q}-k(K_{\X/\A^1}+\D)$ and $(\mathcal X, \mathcal D + \mathcal X_0)$ is purely log terminal (plt). %$\X_0$ is klt.
\end{enumerate}

The \textup{Donaldson-Futaki invariant} of a normal test configuration $(\X,\D;\cL)/\A^1$ is $$\DF(\X,\D;\cL):=\frac{1}{(-K_X-D)^n}\left(\frac{n}{n+1}\cdot\frac{\overline{\cL}^{n+1}}{k^{n+1}}+\frac{(\overline{\cL}^n.(K_{\overline{\X}/\pr^1}+\overline{\D}))}{k^n}\right),$$ where $(\overline{\X},\overline{\D};\overline{\cL})$ is the natural compactification of $(\X,\D;\cL)$ over $\pr^1$. 
\end{definition}

Now we define the notion of K-stability of log Fano pairs.

\begin{definition}
    A log Fano pair $(X,D)$ is called 
    \begin{enumerate}
        \item \textup{K-semistable} if  $\DF(\X,\D;\cL)\geq0$ for any normal test configuration $(\X,\D;\cL)/\A^1$ and $k\in\Q$ such that $L$ is Cartier;
        \item \textup{K-polystable} if it is K-semistable, and $\DF(\X,\D;\cL)=0$ for some test configuration $(\X,\D;\cL)$ if and only if it is a product TC.
        \item \textup{K-stable} if it is K-semistable, and $\DF(\X,\D;\cL)=0$ for some test configuration $(\X,\D;\cL)$ if and only if it is a trivial TC.
    \end{enumerate}
\end{definition}

The following prior result which relates the K-semistability of two log Fano pairs to the K-semistability of their products will be used throughout this paper. 

\begin{theorem}[{\cite[Theorem 1.1, Corollary 3.4, Proposition 4.1]{Zhuang_2020}}]\label{zhuang_thm}
    Let $X_i$ ($i=1$, $2$) be $\Q$-Fano varieties, and let $X=X_1\times X_2$. Then, $X$ is K-semistable (resp. K-polystable, K-stable) if and only if $X_i$ ($i=1$, $2$) are both K-semistable (resp. K-polystable, K-stable).

    Let $(X_i,D_i)$ ($i=1$, $2$) be log Fano pairs and let $(X,D)=(X_1\times X_2,D_1\boxtimes D_2)$. Then,  $(X,D)$ is K-semistable (resp. K-polystable K-stable) if and only if $(X_i,D_i)$ ($i=1$, $2$) are both K-semistable (resp. K-polystable, K-stable).
\end{theorem}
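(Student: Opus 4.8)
The plan is to reduce the three ``linear'' notions (semistable, stable, uniformly stable) to a single product formula for the stability threshold, and to treat K-polystability separately, since it is governed by the equality case of $\Fut$ rather than by a pointwise valuative inequality. First I would recall the valuative criterion (Fujita--Li, Blum--Jonsson): for a prime divisor $E$ over a log Fano pair $(X,\Delta)$, with log discrepancy $A_{X,\Delta}(E)$ and expected vanishing order
\[
S_{X,\Delta}(E)=\tfrac{1}{\vol(-(K_X+\Delta))}\int_0^\infty \vol\big(-(K_X+\Delta)-tE\big)\,dt,
\]
one has $\beta_{X,\Delta}(E)=A_{X,\Delta}(E)-S_{X,\Delta}(E)$ and $\delta(X,\Delta)=\inf_E A/S$, with K-semistability equivalent to $\delta\ge 1$, uniform K-stability to $\delta>1$, and K-stability to $\beta>0$ on all $E$. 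Thus it suffices to establish the product formula $\delta(X_1\times X_2,\Delta_1\boxtimes\Delta_2)=\min(\delta_1,\delta_2)$ together with the divisorial identity realizing it.

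For the inequality $\delta\le\min$, I would use divisors pulled back from the factors. Given a divisor $E_1$ over $X_1$, the divisor $E_1\times X_2$ over $X$ has $A_X(E_1\times X_2)=A_{X_1}(E_1)$, because $K_X=p_1^*K_{X_1}+p_2^*K_{X_2}$ and $\Delta$ splits as a box sum; and since $h^0(X,-m(K_X+\Delta))$ factors as a product over the two factors, the volume identity
\[
\vol_X(p_1^*A+p_2^*B)=\tbinom{n}{n_1}\vol(A)\vol(B)
\]
yields $S_X(E_1\times X_2)=S_{X_1}(E_1)$, hence $\beta_X(E_1\times X_2)=\beta_{X_1}(E_1)$ and an equal $A/S$ ratio. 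Taking infima over divisors from each factor gives $\delta(X)\le\min(\delta_1,\delta_2)$; moreover a destabilizing divisor of a factor produces one on the product, which already establishes the ``only if'' directions for the semistable, stable and uniformly stable cases.

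The reverse inequality $\delta\ge\min$ is the crux, and the step I expect to be hardest. Here I would approximate $\delta=\lim_m\delta_m$ by basis-type log canonical thresholds and exploit that the anticanonical section ring of the product is the tensor product of those of the factors. A product basis $\{s_i\boxtimes t_j\}$ gives a box divisor $p_1^*D_1+p_2^*D_2$ whose log canonical threshold on $(X,\Delta)$ equals $\min\big(\lct(X_1,\Delta_1;D_1),\lct(X_2,\Delta_2;D_2)\big)$, which reduces the estimate to the factors. The genuine obstacle is that a general $m$-basis is \emph{not} a product basis while $\delta_m$ is an infimum over all bases; I would overcome this by degenerating to product/monomial bases, equivalently by using the product structure of the Okounkov bodies and Duistermaat--Heckman measures, or by reducing to torus-equivariant test configurations. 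This gives $\delta\ge\min$ and completes the semistable, stable and uniform equivalences.

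Finally I would handle K-polystability separately. For ``only if'', the factors are already K-semistable, and a test configuration of $X_1$ with $\Fut=0$ induces a product test configuration $\X_1\times X_2$ of $X$ with $\Fut=0$; K-polystability of $X$ forces it to be a product test configuration, which in turn forces $\X_1$ to be one, using uniqueness of the product decomposition of the Fano central fibers. For ``if'', $X$ is K-semistable by the above, so it admits a unique K-polystable degeneration $X_0$ by separatedness of K-moduli; analyzing a special test configuration of $X$ with $\Fut=0$ equivariantly, and using $\Aut(X_1\times X_2)^0\supseteq\Aut(X_1)^0\times\Aut(X_2)^0$, one identifies $X_0\cong X$ and shows the only vanishing-Futaki configurations are product ones. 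The subtle point in this last case is controlling the equality locus through the structure of the automorphism group.
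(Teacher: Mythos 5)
This statement is not proved in the paper at all: it is imported verbatim from Zhuang's product theorem, cited as Theorem 1.1, Corollary 3.4 and Proposition 4.1 of \cite{Zhuang_2020}, and used as a black box throughout. So the only meaningful comparison is with that cited source, and there your skeleton is indeed the right one: Zhuang's argument also runs through the valuative criteria, the easy inequality $\delta(X_1\times X_2,\Delta_1\boxtimes\Delta_2)\le\min(\delta_1,\delta_2)$ via divisors pulled back from the factors (your computation $A_X(E_1\times X_2)=A_{X_1}(E_1)$, $S_X(E_1\times X_2)=S_{X_1}(E_1)$ is correct), and a separate analysis for K-polystability.

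However, as a standalone proof your proposal has a genuine gap precisely at the step you flag as hardest, and the mechanism you suggest for it would fail as stated. If you degenerate an arbitrary $m$-basis of $V_1\otimes V_2$ under a one-parameter subgroup of $\GL(V_1)\times\GL(V_2)$, the flat limit of the basis-type divisor is a box sum $p_1^*G_1+p_2^*G_2$, but distinct basis vectors can share the same leading monomial $e_i\otimes f_j$; consequently the sections $e_i$ (resp.\ $f_j$) occur in $G_1$ (resp.\ $G_2$) with unequal multiplicities, so $G_1$ and $G_2$ are in general \emph{not} basis-type divisors of the factors, and their log canonical thresholds can be far below $\delta_m(X_1,\Delta_1)$ and $\delta_m(X_2,\Delta_2)$ (in the extreme case $G_1$ is close to $\frac{1}{m}\di(e_1)$ for a single section $e_1$). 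Lower semicontinuity of the lct then gives a bound in the right direction but of no use. Repairing exactly this failure is the actual content of Zhuang's ``bases compatible with filtrations'' machinery (the precursor of Abban--Zhuang adjunction), and it cannot be replaced by an appeal to Okounkov bodies or equivariance without substantial new input. Separately, your K-polystability argument invokes uniqueness of the product decomposition of the (possibly singular, klt) central fibers, i.e.\ a Krull--Schmidt/cancellation statement for $\Q$-Fano varieties; this is itself a nontrivial theorem and is not how the cited proof proceeds --- Zhuang instead analyses the equality case of his product inequality for special test configurations with $\Fut=0$, combined with the Li--Wang--Xu theory of K-polystable degenerations and the inclusion $\Aut(X_1)\times\Aut(X_2)\subset\Aut(X_1\times X_2)$. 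So the proposal correctly reconstructs the architecture of the cited proof, but the two essential difficulties are left unresolved, and the specific fixes proposed for them do not work as written.
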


We will also define the notion of $\Q$-Fano families and $\Q$\emph-Gorenstein log Fano families, which are used to define the K-moduli functors we will study in this paper.

\begin{definition}\label{Q-Gor K-ss Fano family}
    A $\Q$\emph{-Fano family} is a morphism $f\colon \X\rightarrow B$ of schemes such that 
    \begin{enumerate}
        \item $f$ is projective and flat of pure relative dimension $n$ for some positive integer $n$;
        \item the geometric fibres of $f$ are $\Q$-Fano varieties;
        \item $-K_{\X/B}$ is $\Q$-Cartier and $f$-ample;
        \item $f$ satisfies Koll{\'a}r’s condition (see, {\cite[24]{kollar2009hulls}}).
    \end{enumerate}
    We call a $\Q${-Fano family} $f\colon \X\rightarrow B$ a K-semistable $\Q${-Fano family} if in addition to the above, the fibres $\X_b$ are K-semistable.
\end{definition}

% \begin{definition}\label{Q-Gor smoothable family}
    % Let $c$, $r$ be positive rational numbers. A log Fano pair $(X,cD)$ is $\Q$\emph{-Gorenstein smoothable} if there exists a $Q$-Fano family $\pi \colon \X\rightarrow C$ over a pointed smooth curve $(0\in C)$ and a relative Mumford divisor $\D$ on $\X$ over $C$ (c.f. \cite[Definition 1]{kollar2018mumford}) such that the following holds:
    % \begin{enumerate}
    %     \item $\D$ is $\Q$-Cartier, $\pi$-ample, and $\D\sim_{\pi,\Q}-rK_{\X/C}$;
    %     \item Both $\pi$ and $\pi|_{\D}$ are smooth morphisms over $C\setminus\{0\}$;
    %     \item  $(\X_0,c\D_0)\cong(X,cD)$, in particular, X has klt singularities.
%     \end{enumerate}
Let $\pi \colon \X\rightarrow B$ be a $\Q$-Gorenstein flat family of $\Q$-Fano varieties of dimension $n$, such that the anti-canonical divisor $K_{\X/B}$ is a relatively ample $\Q$-line bundle on $\X$, and $B$ is a normal base. Suppose $\D$ is an effective $\Q$-divisor on $\X$ such that every component of $\D$ is flat over $B$. If all the  fibres $(\X_b,c\D_b)$ are $\Q$-Gorenstein log Fano pairs, we call the flat family $f\colon (\X,c\D)\rightarrow B$ a $\Q$\emph{-Gorenstein log Fano family}.
% and $\D\sim_{\pi,\Q}-rK_{\X/B}$.

% \begin{definition}
%     A $\Q$\emph{-Gorenstein log Fano family} $f\colon (\X,c\D)\rightarrow B$ over a reduced scheme $B$ consists of a $\Q$-Fano family $f\colon \X\rightarrow B$ and an effective $\Q$-divisor on $\X$ such that every component of $\D$ is flat over $B$ and $\D\sim_{\pi,\Q}-rK_{\X/C}$, such that all fibres $(\X_b,c\D_b)$ are $\Q$-Gorenstein log Fano pairs.
% \end{definition}

We will also define the notion of S-equivalence.

\begin{definition}\label{def:s-eq}
    Two K-semistable $\Q$-Fano varieties $X$ and $X'$ are \emph{S-equivalent} if they degenerate to a common K-semistable log Fano pair via special test configurations.
\end{definition}

The following Lemma will be key in proving the main theorem of this paper, as it details precisely the behaviour of deformations of products of K-semistable Fano varieties. Similar results have also appeared in \cite{Li_2018_products}.

\begin{lemma}[{\cite[Lemma 2.17]{Zhuang_2020}}] \label{lem:deform Fano product}
Let $X_i$ $(i=1,2)$ be normal projective varieties and let $X=X_1\times X_2$. Let $D$ be an effective $\Q$-divisor on $X$ such that $(X,D)$ is log Fano. Let $(\cX,\cD)$ be a pair and let $\phi:\cX\to B$ be a flat projective fibration onto a smooth variety $B$ such that the support of $\cD$ does not contain any fiber of $\phi$. Let $0\in B$ and assume that $(\cX_0,\cD_0)=(\phi^{-1}(0),\cD|_{\cX_0})$ is isomorphic to $(X,D)$. Then there exists an open set in the analytic topology $0\in U\subseteq B$ and two projective morphisms $\cX_i\to U$ $(i=1,2)$ with central fibers $X_i$ such that $\cX\times_B U \cong \cX_1\times_U \cX_2$ over $U$. Moreover, both $\cX_i$ $(i=1,2)$ are uniquely determined by $\phi$ and $U$.
    % \begin{enumerate}
    %     \item There exists an open set in the analytic topology $0\in U\subseteq B$ and two projective morphisms $\cX_i\to U$ $(i=1,2)$ with central fibers $X_i$ such that $\cX\times_B U \cong \cX_1\times_U \cX_2$ over $U$. Moreover, both $\cX_i$ $(i=1,2)$ are uniquely determined by $\phi$ and $U$.
    %     % \item If $B=\A^r$ and $(\cX,\cD)$ admits a $\G_m^r$-action such that $\phi:\cX\to \A^r$ is $\mG_m^r$-equivariant, then one can take $U=\A^r$ in $(1)$ and moreover, the factors $\cX_i$ also admit $\mG_m^r$-actions making the isomorphism $\cX \cong \cX_1\times_{\A^r} \cX_2$ equivariant.
    % \end{enumerate}
\end{lemma}

We are now in a position to define the K-moduli functor/stack for log Fano pairs. The existence and properness of the K-moduli functor as an Artin stack are due to a number of recent contributions \cite{jiang, codogni, blum_halpern-leistner_liu_xu_2021, xu_valuations, blum2021openness, xu2020uniqueness, alper_reductivity, liu2021finite}.

We begin with a definition.

\begin{definition}\label{k-moduli stack def}
    Let $n$, $V$ be positive integers. The \emph{K-moduli stack of K-semistable Fano varieties of dimension $n$ and volume $V$} is the Artin stack $\M^K_{n,V}$ defined as the functor sending a reduced base $S$ to 
    \[
{\M}^K_{n,V}(S):=\left\{\X\rightarrow S\left| \begin{array}{l}\X\rightarrow S\textrm{ is a K-semistable $\Q$-Fano family, where}\\
\textrm{the fibres have dimension $n$ and volume $V$}\\ 
\end{array}\right.\right\}.
\]
\end{definition}

\begin{theorem}[{{K-moduli Theorem}}]\label{K-moduli theorem} 
Let $r\in\Q_{\geq 1}$ and $c\in(0,1/r)$ be a rational number, and $\chi$ be the Hilbert polynomial of an anti-canonically polarised  Fano variety. Consider the moduli pseudo-functor sending a reduced base $S$ to

\[
{\M}^K_{X,D,c}(S):=\left\{(\X,\D)/S\left| \begin{array}{l}(\X,c\D)/S\textrm{ is a $\Q$-Gorenstein log Fano family,}\\ \textrm{each fiber $(\X_s,c\D_s)$ is K-semistable, and }\\ \textrm{$\chi(\X_s,\cO_{\X_s}(-mK_{\X_s}))=\chi(m)$ for $m$ sufficiently divisible.}\end{array}\right.\right\}.
\]
Then there is a reduced Artin stack ${\M}^K(c)$ of finite type over $\C$ representing this moduli pseudo-functor. The $\C$-points of ${\M}^K(c)$ parameterise K-semistable $\Q$-Gorenstein log Fano pairs $(X,cD)$ with Hilbert polynomial $\chi(X,\cO_X(-mK_X))=\chi(m)$ for sufficiently divisible $m\gg 0$ and $D\sim_{\Q}-rK_X$. Moreover, the stack ${\M}^K(c)$ admits a good moduli space $\overline{M}^K(c)$, which is a reduced projective scheme of finite type over $\C$, whose $\C$-points parameterise K-polystable log Fano pairs. 
\end{theorem}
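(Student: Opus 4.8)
The plan is to assemble the statement from several deep inputs already present in the literature; no single argument proves it. First I would establish \emph{boundedness}: for fixed Hilbert polynomial $\chi$ and fixed $r$, $c$, the $\Q$-Gorenstein smoothable K-semistable log Fano pairs $(X,cD)$ form a bounded family. This rests on boundedness of K-semistable Fanos of fixed volume (ultimately Birkar's solution of BAB, see \cite{jiang}), upgraded to the log setting. Boundedness furnishes a single sufficiently divisible $m\gg 0$ for which $-mK_X$ is very ample on every such $X$, embedding it into a fixed $\pr^N$ with fixed Hilbert polynomial. Using this uniform embedding I would build a parameter scheme inside a suitable Hilbert scheme of $\pr^N$ that tracks both the subscheme $X$ and the relative Mumford divisor $\D$, and cut out the locally closed locus $Z$ whose fibres are $\Q$-Gorenstein smoothable log Fano families satisfying Koll\'ar's condition with $\D\sim_{\Q}-rK$. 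The group $\PGL_{N+1}$ acts on $Z$, with orbits corresponding to isomorphism classes of embedded pairs.

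By openness of K-semistability in families \cite{blum2021openness, xu-zhuang}, the K-semistable locus $Z^{\mathrm{kss}}\subset Z$ is open, and the quotient stack $\M^K(c):=[Z^{\mathrm{kss}}/\PGL_{N+1}]$ is a reduced Artin stack of finite type over $\C$ representing the pseudo-functor; independence of the auxiliary $m$ is a standard comparison of the resulting stacks. For the good moduli space I would invoke the existence criterion of Alper--Halpern-Leistner--Heinloth, which reduces matters to three properties of $\M^K(c)$: reductivity of the stabilisers at K-polystable points \cite{alper_reductivity}; $\Theta$-reductivity, i.e.\ the existence of special degenerations to K-semistable pairs \cite{Li-Wang-Xu}; and S-completeness, i.e.\ uniqueness of K-polystable degenerations \cite{xu2020uniqueness}. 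Together with finiteness of automorphisms \cite{liu2021finite}, these yield a good moduli space $\overline{M}^K(c)$ that is a separated, reduced algebraic space of finite type whose closed points are the K-polystable pairs \cite{blum_halpern-leistner_liu_xu_2021}.

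Properness would then follow from the valuative criterion: the two halves of stable reduction for K-stability---existence of a K-semistable limit and uniqueness of the resulting K-polystable one---are precisely the content of \cite{Li-Wang-Xu, xu2020uniqueness}. Finally, projectivity comes from the Knudsen--Mumford expansion of the CM line bundle, which I would show descends to a line bundle on $\overline{M}^K(c)$ and is ample using positivity of the CM degree along test configurations \cite{codogni, xu_valuations}, promoting the proper algebraic space to a projective scheme.

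The main obstacle is the existence step. Verifying $\Theta$-reductivity and S-completeness encodes the hardest inputs of the whole theory---stable reduction and uniqueness of K-polystable limits---while reductivity of the automorphism groups of K-polystable Fanos is itself a substantial theorem. Equally delicate, and logically independent, is the positivity of the CM line bundle needed to pass from properness to projectivity.
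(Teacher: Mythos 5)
Your proposal is essentially the paper's own treatment: the paper gives no proof of this theorem, stating it as a known result assembled from exactly the literature you invoke (boundedness via \cite{jiang}, openness via \cite{blum2021openness, xu-zhuang}, reductivity via \cite{alper_reductivity}, existence and separatedness of the good moduli space via \cite{blum_halpern-leistner_liu_xu_2021, Li-Wang-Xu, xu2020uniqueness, liu2021finite}, and projectivity of the CM line bundle via \cite{codogni, xu_valuations}). Your sketch is a faithful reconstruction of how those references combine, so there is nothing to correct against the paper itself.
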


Here, pseudo-functor refers to a 2-functor which preserves composition and identities of 1-morphisms only up to coherent specified 2-isomorphism.
% \subsection{Properties of morphisms of stacks}

% In this Section, we will define properties of morphisms of stacks that will be useful later on. Let $\X$, $\Y$ be two stacks, and let $f\colon \X\rightarrow \Y$ be a morphism of stacks.

% \begin{definition}
    
% \end{definition}

\subsection{Structure theorem for $\Q$-Fano varieties}

In this section, we will prove a structure theorem for $\Q$-Fano varieties. We expect that this result is well-known to experts, but we include it for consistency. We begin with a definition.

\begin{definition}\label{simple varieties}
    A $\Q$-Fano variety is called \emph{simple} if it is not a product of lower dimensional $\Q$-Fano varieties.
\end{definition}

We are now in a position to prove a structure Theorem for $\Q$-Fano varieties.

\begin{theorem}\label{structure theorem for Fanos}
    Given any $\Q$-Fano variety $X$, there exists a unique decomposition $X=X_1\times X_2\times\dots\times X_m$ such that each $X_i$ is a simple $\Q$-Fano variety. In particular, if there exists another decomposition $X=Y_1\times Y_2\times\dots\times Y_l$ into simple $\Q$-Fano varieties, and a map $f\colon X_1\times X_2\times\dots\times X_m\rightarrow Y_1\times Y_2\times\dots\times Y_l$, the following must be true:
    \begin{enumerate}
        \item $m=l$,
        \item there exists $\sigma \in S_m$, such that $f_i\colon X_i \rightarrow Y_{\sigma(i)}$ is an isomorphism, and $f = (f_1,\dots,f_m)$.
    \end{enumerate}
    Here, $S_m$ is the symmetric group of order $m$.
\end{theorem}
\begin{proof}
    Since $X$ is $\Q$-Fano, by Kawamata-Viehweg vanishing we have $H^i(X,\cO_{X}) = 0$ for all $i > 0$. Furthermore, by the K{\"u}nneth formula, since $X=X_1\times X_2\times\dots\times X_m$ we have $H^i(X_j,\cO_{X_j}) = 0$ for all $i > 0$ and $1\leq j\leq m$. Then, by \cite[Exercise III.12.6]{Hartshorne_2010} we have $\operatorname{Pic}(X)\cong \prod_{i=1}^m \operatorname{Pic}(X_i)$.
    
    Suppose now that there exists a different decomposition $X=Y_1\times Y_2\times\dots\times Y_l$ of $X$ into simple $\Q$-Fano varieties $Y_i$. Let $M_i$ be an ample line bundle on $Y_i$, and let $\pi\colon X\rightarrow Y_i$ be the projection from $X$ to $Y_i$. Since $X$ is $\Q$-Fano, the line bundle $\pi^*M_i$ is nef and semi-ample, and for a suitable positive integer $r\in \N$,  the map $\psi_i\coloneqq\psi_{|r\pi^*M_i|}\colon X\rightarrow Y_i$ is surjective. By the above discussion, $\pi^*M_i = \sum_{j=1}^m p_j^*L_j$, where the $L_j$ are semi-ample line bundles on each factor $X_j$, with corresponding projections $p_j\colon X\rightarrow X_j$. Note, that some of the $L_j$ may vanish. Choosing $r$ large enough, we have that the map
    $$\psi_i = \psi_{|r\sum_{j=1}^m p_j^*L_j|}\colon X\rightarrow \prod_{j=1, L_j\not\sim 0}^mX_j$$
    is also surjective, which shows that $Y_i\cong\prod_{j=1, L_j\not\sim 0}^mX_j$. But, we assumed that $Y_i$ is simple for each $i$. Hence, to avoid a contradiction, we must have $m=l$, and $Y_i \cong X_{\sigma(i)}$ for some $\sigma\in S_m$, as required.
\end{proof}

% \begin{remark}
%     The key element for the proof of Theorem \ref{structure theorem for Fanos} is the Kawamata-Viehweg vanishing, which shows that $H^i(X,\cO_{X}) = 0$ for $i>0$. Since this 
% \end{remark}

\section{Product connected components of the K-moduli stack}\label{sec:product components}

In this section, we will study the local theory of deformations of $\Q$-Fano varieties, which will allow us to prove Theorem \ref{intro_thm: optimal result}. In particular, we will show that families of $\Q$-Fano varieties which are products, decompose as products themselves.

\begin{theorem}\label{optimal result}
    Let $\M$ be a connected component of $\M^K_{n,V}$ with reduced structure. If there exists one point $[X]\in \M$, such that $X\cong X_1\times X_2$ is a product, then so is every point of $\M$. Moreover, after passing through an {\'e}tale cover, the universal family over $\M$ splits as a fibre product.
\end{theorem}
\begin{proof}
% The first statement of this corollary follows directly from Theorems  \ref{main_thm} and \ref{map is etale}. We are left to prove the statement regarding the universal family.

Let $[X]\in \M$ be a point in $\M$, such that $X\cong X_1\times X_2$ is a product. Let also $\phi\colon \X\rightarrow B$ be a flat projective fibration onto a smooth variety $B$, with special fibre $\X_0\cong X$. By Lemma \ref{lem:deform Fano product} there exists an analytic open set $U$, such that $\X$ splits into a product $\X\times_{B} U\cong \X_1\times_{U} \X_2 $, where $\X_i\rightarrow B$ are both flat projective fibrations with special fibres $\X_{i,0}\cong X_i$. 
% Furthermore, we have $\X\times_{B}U \cong \X_1\times_{U}\X_2$, for two fibrations $\X_i\rightarrow B$, with projections $\X_i\rightarrow U$, and special fibres $\X_{i,0} \cong X_i$. 
Furthermore, by Kawamata-Viehweg vanishing we have $H^j(\X_b,\cO_{\X_b} ) = 0$ for all  $j>0$ and, by K{\"u}nneth formula we have $H^j(\X_i,\cO_{\X_i} ) =0$   for $i = 1,2$ and all $j>0$. Since $\M$ has reduced structure, by shrinking $U$ if necessary (see the proof of \cite[Lemma 2.17]{Zhuang_2020}) we also have $\Pic(\X) \cong H^2(\X,\Z)$ and  $\Pic(\X)\cong \Pic(\X_0)$. Since $X$ and $X_i$ are $\Q$-Fano, as in the proof of Theorem \ref{structure theorem for Fanos}, we also have $\Pic(\X) \cong \Pic(X)\cong \Pic(X_1)\times \Pic(X_2)\cong \Pic(\X_1)\times \Pic(\X_2)$. Consider the Picard schemes $\mathbf{Pic}_{\X/B}$, $\mathbf{Pic}_{\X_i/B}$, and let $\pi_i\colon X\rightarrow X_i$ be the projections. Let $M_i$ be two ample line bundles in $X_i$, respectively. By applying the Artin approximation theorem to $\mathbf{Pic}_{\X/B}$, $\mathbf{Pic}_{\X_1/B}$ and $\mathbf{Pic}_{\X_2/B}$, we see that there exists a common {\'e}tale neighbourhood $\Tilde{U}$ over the ample line bundles $M_i$ of $X_i$. Also, the extension $\L_i$ of $L_i = \pi_i^*M_i$ to $\X$ is a $\phi$-ample line bundle on $\X$, and the fibrations (over $B$) induced by the linear system $|m\L_i|$ for sufficiently large and divisible $m$,  $\psi_i = \psi_{|m\L_i|} \colon \X \rightarrow \X_i$, satisfy that $\psi_i|_{\X_0}$ is given by the projection $\X \rightarrow \X_i$ and $\psi_1\times\psi_2|_{\X_0}$ is the isomorphism $X \cong X_1\times X_2$. In particular, we have $\X \times_B \Tilde{U}\cong \X_{1}\times_{\Tilde{U}} \X_2$, as required.

% We will show that there is an open set in the {\'e}tale topology, such that the same conditions hold. Let $X_{\mathrm{cx}}$ be the site of analytically open subsets $U$ of $X$, and let $X_{\mathrm{cx}}^*$ be the site of coverings of holomorphic maps $f\colon U\rightarrow X^{\mathrm{an}}$. Let $X_{\text{\'et}}$ be the {\'e}tale site. As in \cite[\S 2]{mumford_picard}, there exists an equivalence of categories $\alpha\colon X_{\mathrm{cx}}^*\rightarrow X_{\mathrm{cx}}$, and a continuous map $\beta\colon X_{\mathrm{cx}}^*\rightarrow X_{\text{\'et}}$. By abuse of notation, we will write the corresponding open set $\alpha^{-1}(U)$ as $U$. Consider its image $\Tilde{U}\coloneqq\beta(U)$ in $X_{\text{\'et}}$, and let $\X\times_{B} \Tilde{U}$ be the corresponding fibre product. As in the proof of Lemma \ref{lem:deform Fano product}, we have $\X\times_{B} \Tilde{U} \cong \X'_1\times_{\Tilde{U}}\X'_2$, for two fibrations $\X'_i\rightarrow B$, with projections $\X_i'\rightarrow \Tilde{U}$, and special fibres $\X'_{i,0} \cong X_i$. Furthermore, by Kawamata-Viehweg vanishing we have $H^j(\X_b,\cO_{\X_b} ) = 0$ for all  $j>0$ and, by K{\"u}nneth formula we have $H^j(\X_i,\cO_{\X_i} ) =H^j(\X’_i,\cO_{\X’_i} )=0$   for $i = 1,2$ and all $j>0$. Since $\M$ has reduced structure, we also have $\Pic(\X) \cong H^2(\X,\Z)$ and  $\Pic(\X)\cong \Pic(\X_0)$. This implies that $\Pic(X_i) \cong \Pic(\X_i)\cong \Pic(\X’_i)$, and hence, this decomposition is unique. Hence, by passing through an {\'e}tale cover, the universal family over $\M$ splits as a fibre product. 

Since K-polystable degenerations are unique \cite{uniqueness_K-ps} and the K-moduli space is proper \cite{K-moduli_properness}, K-polystable degenerations of $X$ must also be a product. From the above argument, we see that in an {\'e}tale open neighbourhood, $\Tilde{U}$, all K-semistable degenerations are also products. Since $\M$ is a connected component of $\M^K_{n,V}$ with reduced structure, we see that every point in $\M$ is a product. Furthermore, by the above argument, we also see that after passing through an {\'e}tale cover, the universal family over $\M$ splits as a fibre product.

    % To show every point is a product: use openness to show that there is an etale open nbhd around [x] in which every point is a product (This is Zhuang + some extra work moving from analytic to etale topology (see Ruadhai's suggestion+ cohomological arguments and comparing Picard groups of open subsets. last part should emulate Zhuang's proofs).
    % Now from properness of K-moduli+main thm on K-ps deg K-ps deg are also products. Then above argument shows that K-ss degenerations are also products. Since $\M$ is connected first result follows. Result on universal family should follow similar to Zhuang's proof of Lemma 2.17.
\end{proof}

%%%% MAYBE THE KALOGHIROS PETRACCI APPROACH CAN EXTEND THIS FROM REDUCIBLE TO INFINITESSIMAL?? NEED TO CHECK

% \begin{remark}\label{rem: kaloghiros-petracci}
%     It should be noted that the above result could be strengthened to include infinitesimal structures on the moduli stack/space. This would be achieved via extending \cite[Proposition 4.1]{kaloghiros-petracci} to a $\Q$-Gorenstein setting, and then applying a similar proof strategy as in \cite{Bhatt_Ho_Patakfalvi_Schnell_2013}. The main difficulty with this approach is that Fano varieties may have infinitesimal automorphisms. However, we provide the Lemma extending \cite[Proposition 4.1]{kaloghiros-petracci} to the $\Q$-Gorenstein setting below.
% \end{remark}

We will now study how $\Q$-Gorenstein deformations of Fano varieties behave in products. Our approach mimics the approach of \cite{Bhatt_Ho_Patakfalvi_Schnell_2013}, using ideas from \cite[Proposition 4.1]{kaloghiros-petracci} to the $\Q$-Gorenstein setting. 

Recall that for a scheme $X$ the canonical cover $\mathfrak{X}$ is a Deligne–Mumford stack with coarse moduli space $X$ such that $\mathfrak{X}\rightarrow X$ is an isomorphism over the Gorenstein locus of $X$. We denote by $\mathbb{T}^{\mathrm{qG},i}_X$ the $i$th $\mathrm{Ext}$ group of the cotangent complex of $\mathfrak{X}$. Here, $\mathbb{T}^{\mathrm{qG},1}_X$ is the tangent space and $\mathbb{T}^{\mathrm{qG},2}_X$. is an obstruction space for the $\Q$-Gorenstein deformation functor $\operatorname{Def}^{\mathrm{qG}}_X$ of $X$.

\begin{lemma}\label{petracci extension}
    Let $X$ and $Y$ be $\Q$-Fano varieties. The natural map $$\Product_{X,Y}(A) \colon\operatorname{Def}^{\mathrm{qG}}_X\times\operatorname{Def}^{\mathrm{qG}}_Y\rightarrow \operatorname{Def}^{\mathrm{qG}}_{X\times Y} $$
    is formally smooth and induces an isomorphism on tangent spaces.
\end{lemma}
\begin{proof}
    Let $\epsilon_X\colon \mathfrak{X} \rightarrow X$, $\epsilon_Y\colon \mathfrak{Y} \rightarrow Y$ be the canonical covering stacks of $X$ and $Y$, respectively (see \cite{Bhatt_Ho_Patakfalvi_Schnell_2013}). The product map $\epsilon_X\times \epsilon_Y$ is the canonical covering stack of $X\times Y$. Let $p_X\colon \mathfrak{X}\times \mathfrak{Y}\rightarrow \mathfrak{X}$ and $p_Y\colon \mathfrak{X}\times \mathfrak{Y}\rightarrow \mathfrak{Y}$ be the two projections. Since $\epsilon_X$ and $\epsilon_Y$ are both cohomologically affine, by Kawamata-Viehweg vanishing, we have $\mathrm{H}^i(\mathcal{O}_{\mathfrak{X}})=\mathrm{H}^i(\mathcal{O}_{\mathfrak{Y}}) = 0$ for every $i > 0$, and $\mathrm{H}^0(\mathcal{O}_{\mathfrak{X}})=\mathrm{H}^0(\mathcal{O}_{\mathfrak{Y}}) = \C$. Hence, $\mathrm{R}{q_X}_*\mathcal{O}_{\mathfrak{X}\times \mathfrak{Y}} = \mathcal{O}_{\mathfrak{X}}$ and $\mathrm{R}{q_Y}_*\mathcal{O}_{\mathfrak{X}\times \mathfrak{Y}} = \mathcal{O}_{\mathfrak{Y}}$. For every $i\geq 0$ we thus have
    \begin{equation*}
        \begin{split}
            \mathrm{Ext}^i(p_X^*L_{\mathfrak{X}}, \mathcal{O}_{\mathfrak{X}\times \mathfrak{Y}} )& = \mathrm{Ext}^i(L_{\mathfrak{X}}, \mathrm{R}{q_X}_*\mathcal{O}_{\mathfrak{X}\times \mathfrak{Y}} )\\
            &= \mathrm{Ext}^i(L_{\mathfrak{X}}, \mathcal{O}_{\mathfrak{X}} )\\
            &= \mathbb{T}^{\mathrm{qG},i}_X
        \end{split}
    \end{equation*}
    and similarly 
\begin{equation*}
        \begin{split}
            \mathrm{Ext}^i(p_Y^*L_{\mathfrak{Y}}, \mathcal{O}_{\mathfrak{X}\times \mathfrak{Y}} )& = \mathrm{Ext}^i(L_{\mathfrak{Y}}, \mathrm{R}{q_Y}_*\mathcal{O}_{\mathfrak{X}\times \mathfrak{Y}} )\\
            &= \mathrm{Ext}^i(L_{\mathfrak{Y}}, \mathcal{O}_{\mathfrak{Y}} )\\
            &= \mathbb{T}^{\mathrm{qG},i}_Y.
        \end{split}
    \end{equation*}
Recall that the cotagent complex $L_{\mathfrak{X}\times \mathfrak{Y}}$ decomposes as $L_{\mathfrak{X}\times \mathfrak{Y}} = p_X^*L_{\mathfrak{X}}\oplus p_Y^*L_{\mathfrak{Y}}$, and hence 
\begin{equation*}
        \begin{split}
        \mathbb{T}^{\mathrm{qG},i}_{X\times Y}&= \mathrm{Ext}^i(L_{\mathfrak{X}\times \mathfrak{Y}}, \mathcal{O}_{\mathfrak{X}\times \mathfrak{Y}} )\\
        &= \mathrm{Ext}^i(p_X^*L_{\mathfrak{X}}, \mathcal{O}_{\mathfrak{X}\times \mathfrak{Y}} )\oplus\mathrm{Ext}^i(p_Y^*L_{\mathfrak{Y}}, \mathcal{O}_{\mathfrak{X}\times \mathfrak{Y}} )\\
            &= \mathbb{T}^{\mathrm{qG},i}_X\oplus \mathbb{T}^{\mathrm{qG},i}_Y,
        \end{split}
    \end{equation*}
    for all $i\geq 0$. Since for $i=1$ we get the usual tangent space, and for $i=2$ we get an obstruction space for the $\Q$-Gorenstein deformation functor $\operatorname{Def}^{\mathrm{qG}}$, we obtain a bijection on tangent spaces and an injection on obstruction spaces, which implies that the map $$\operatorname{Def}^{\mathrm{qG}}_X\times\operatorname{Def}^{\mathrm{qG}}_Y\rightarrow \operatorname{Def}^{\mathrm{qG}}_{X\times Y} $$
    is formally smooth, as required.
\end{proof}

\begin{remark}
    The above proof also shows that the natural product map 
    $$\mathrm{RHom}(L_X,\mathcal{O}_X)\times \mathrm{RHom}(L_Y,\mathcal{O}_Y) \rightarrow \mathrm{RHom}(L_{X\times Y},\mathcal{O}_{X\times Y})$$
    is bijective, since the map 
    $$\mathrm{Ext}^i(p_X^*L_{\mathfrak{X}}, \mathcal{O}_{\mathfrak{X}\times \mathfrak{Y}} )\times\mathrm{Ext}^i(p_Y^*L_{\mathfrak{Y}}, \mathcal{O}_{\mathfrak{X}\times \mathfrak{Y}} )\rightarrow \mathrm{Ext}^i(L_{\mathfrak{X}\times \mathfrak{Y}}, \mathcal{O}_{\mathfrak{X}\times \mathfrak{Y}} )$$
is bijective
\end{remark}

Let $\mathrm{SArt}_{\C}$ be the $\infty$-category of derived local artinian $\C$-algebras, i.e. algebras $A\in \mathrm{SArt}_{\C}$ with $\pi_0(A)$ local with residue field $\C$, and $\oplus_i\pi_i(A)$ a finite dimensional $\C$-vector space. This category provides test objects for deformation-theoretic questions in derived algebraic geometry, and we call its objects \emph{small derived algebras}.

\begin{theorem}\label{petracci extension-full}
    Let $X$ and $Y$ be $\Q$-Fano varieties. The natural map $$\Product_{X,Y} \colon \operatorname{Def}^{\mathrm{qG}}_X\times\operatorname{Def}^{\mathrm{qG}}_Y\rightarrow \operatorname{Def}^{\mathrm{qG}}_{X\times Y} $$
    is an isomorphism of functors on $\mathrm{SArt}_{\C}$.
    % \, the $\infty$-category of derived local artinian $\C$-algebras.
\end{theorem}
\begin{proof}
    We will show that the natural product map
    $$\Product_{X,Y}(A) \colon \operatorname{Def}^{\mathrm{qG}}_X(A)\times\operatorname{Def}^{\mathrm{qG}}_Y(A)\rightarrow \operatorname{Def}^{\mathrm{qG}}_{X\times Y}(A) $$
    is an equivalence of groupoids for $A \in \mathrm{SArt}_{\C}$ by working inductively on $\dim(A)$. Recall that $\mathrm{SArt}_{\C}$ is the $\infty$-category of derived local artinian $\C$-algebras. These are called \emph{small derived algebras}, which are the objects $A \in \mathrm{SArt}_{\C}$ with $\pi_0(A)$ local with residue field $\C$, and $\oplus_i  \pi_i(A)$ is a finite dimensional as $\C$-vector space. These are natural test objects for deformation-theoretic questions in derived algebraic geometry. When $\dim(A)=1$, we have $A =\C$ and there is nothing to show as both sides are reduced to points. By induction, we may assume that the desired claim is known for all $A \in \mathrm{Art}_{\C}$ such that $\dim(A)\leq n$, where $n$ is a fixed integer. Given an $\Tilde{A} \in \mathrm{Art}_{\C}$ such that $\dim(A)= n+1$, we can find a map $\Tilde{A}\rightarrow A$ with kernel $\C$ as an $A$-module, which is classified by the derivation $D_A \colon L_A \rightarrow \C[1]$. This gives a diagram
\begin{center}
    \begin{tikzcd}[column sep=large]
    \operatorname{Def}^{\mathrm{qG}}_X(\Tilde{A})\times\operatorname{Def}^{\mathrm{qG}}_Y(\Tilde{A})\arrow[r, "\Product_{X,Y}(\Tilde{A})"]\arrow[d] & \operatorname{Def}^{\mathrm{qG}}_{X\times Y}(\Tilde{A}) \arrow[d]\\
    \operatorname{Def}^{\mathrm{qG}}_X(A)\times\operatorname{Def}^{\mathrm{qG}}_Y(A)\arrow[r, "\Product_{X,Y}(A)"] & \operatorname{Def}^{\mathrm{qG}}_{X\times Y}(A) \\
    \end{tikzcd}
\end{center}
where $\Product_{X,Y}(A)$ is bijective by assumption, and we must prove that $\Product_{X,Y}(\Tilde{A})$ is also bijective. We fix flat  $\Q$-Gorenstein deformations $f \colon \X \rightarrow \operatorname{Spec}(A)$ and $f \colon \Y \rightarrow \operatorname{Spec}(A)$ of $\mathfrak{X}$ and $\mathfrak{Y}$ to $\operatorname{Spec}(A)$, where $\mathfrak{X}$ and $\mathfrak{Y}$ are the canonical covering stacks of $X$ and $Y$, respectively. Let $\pi_{f,g}\colon \X\times_{\operatorname{Spec}(A)}\Y\rightarrow \operatorname{Spec}(A)$ denote their fiber product, and let $p_{\X}\colon \X\times_{\operatorname{Spec}(A)}\Y\rightarrow \X$ and $p_{\Y}\colon \X\times_{\operatorname{Spec}(A)}\Y\rightarrow \Y$ be the projections.

By the proof of \cite[Theorem 3.3]{Bhatt_Ho_Patakfalvi_Schnell_2013}, in order to show that all fibers of $\Product_{X,Y}(\Tilde{A})$ are non-empty, i.e., if $\pi_{f,g}$  admits a deformation across $\operatorname{Spec}(A)\rightarrow\operatorname{Spec}(A')$, then the same is true for $f$ and $g$, we must show that 
% as well as by the proof of Lemma \ref{petracci extension} where it is shown that 
the maps $$\mathrm{Ext}^i(L_{\mathfrak{X}}, \mathcal{O}_{\mathfrak{X}})\rightarrow \mathrm{Ext}^i(p_X^*L_{\mathfrak{X}}, \mathcal{O}_{\mathfrak{X}\times \mathfrak{Y}} )$$ and $$\mathrm{Ext}^i(L_{\mathfrak{Y}}, \mathcal{O}_{\mathfrak{Y}})\rightarrow \mathrm{Ext}^i(p_X^*L_{\mathfrak{X}}, \mathcal{O}_{\mathfrak{X}\times \mathfrak{Y}} )$$ are isomorphisms for all $i\geq 0$. But, the above has been shown in the proof of Lemma \ref{petracci extension}

% , we deduce that all fibers of $\Product_{X,Y}(\Tilde{A})$ are non-empty, i.e., if $\pi_{f,g}$  admits a deformation across $\operatorname{Spec}(A)\rightarrow\operatorname{Spec}(A')$, then the same is true for $f$ and $g$.

By the proof of Lemma \ref{petracci extension} we have shown that the map 
$$\mathrm{Ext}^i(p_X^*L_{\mathfrak{X}}, \mathcal{O}_{\mathfrak{X}\times \mathfrak{Y}} )\times\mathrm{Ext}^i(p_Y^*L_{\mathfrak{Y}}, \mathcal{O}_{\mathfrak{X}\times \mathfrak{Y}} )\rightarrow \mathrm{Ext}^i(L_{\mathfrak{X}\times \mathfrak{Y}}, \mathcal{O}_{\mathfrak{X}\times \mathfrak{Y}} )$$
is bijective. Then, by the proof of \cite[Theorem 3.3]{Bhatt_Ho_Patakfalvi_Schnell_2013} we immediately obtain that all fibers of $\Product_{X,Y}(\Tilde{A})$ are reduced to a point, i.e.,  that all possible deformations of $\mathfrak{X}\times_{\operatorname{Spec}(A)}\mathfrak{Y}\rightarrow \operatorname{Spec}(A)$ across $\operatorname{Spec}(A)\rightarrow \operatorname{Spec}(A')$ are obtained uniquely by taking products of deformations of each factor. Combining these two results, we conclude that $\Product_{X,Y}(\Tilde{A})$ is bijective.

% QUESTION: IS THERE \textbf{ANYWHERE} WE ASSUME THE GROUPOIDS TO BE DISCRETE? NEED TO VERY CAREFULLY GO OVER THIS
\end{proof}

\begin{corollary}\label{max_optimal_result}
    % The connected component $\M$ in Theorem \ref{optimal result} does not need to be reduced, and may include infinitesimal structures.
    Let $\M$ be a connected component of $\M^K_{n,V}$. If there exists one point $[X]\in \M$, such that $X\cong X_1\times X_2$ is a product, then so is every point of $\M$. Moreover, after passing through an {\'e}tale cover, the universal family over $\M$ splits as a fibre product.
\end{corollary}
\begin{proof}
    Let $X_1$, $X_2$ be two K-polystable simple $\Q$-Fano varieties. Let $A_{X_1}$ and $A_{X_2}$ be the bases of the miniversal deformations of $X_1$ and $X_2$ respectively and let $G_{X_1}$ and $G_{X_2}$ be the automorphism groups of $X_1$ and $X_2$ respectively. 
    From Theorem \ref{petracci extension-full} the base of miniversal deformations of $X=X_1\times X_2$ is $A_{X_1}\times A_{X_2}$. If $X_1\not\cong X_2$, we have $\Aut(X)\cong G_{X_1}\times G_{X_2}$. Otherwise, $\Aut(X)\cong G_{X_1}^2\rtimes S_2$. The local structure of $\M^{K}_{n,V}$ is then given either by 
    $$[\operatorname{Spec}(A_{X_1})\times\operatorname{Spec}(A_{X_2})/ G_{X_1}\times G_{X_2}]\cong [\operatorname{Spec}(A_{X_1})/ G_{X_1}]\times[\operatorname{Spec}(A_{X_2})/ G_{X_2}]$$
    or
    $$[\operatorname{Spec}(A_{X_1})\times\operatorname{Spec}(A_{X_1})/ G_{X_1}\times G_{X_1}\rtimes S_2]\cong [[\operatorname{Spec}(A_{X_1})/ G_{X_1}^2]/S_2].$$
    In both cases, the structure may not be reduced. Let $X_1'$ and $X_2'$ be K-semistable Fano varieties, such that $X_1$ and $X_2$ are the unique K-polystable limits. For a connected component $\M$ containing $X$ and $X'=X_1'\times X_2'$, combining the results of Theorems \ref{optimal result} and \ref{petracci extension-full}  we know that $\Q$-Gorenstein deformations of $X$ and $X'$ split as products of $\Q$-Gorenstein deformations of $X_1$ and $X_2$ in a {\'e}tale neighbourhood $U$. In particular, all points in $\M$ are given by products, and we obtain the desired result.
\end{proof}

\section{Product map for K-moduli stacks}\label{sec:product map}

Recall that the K-moduli stack $\M^K_{n,V}$ is defined as follows: 

\[\mathcal{M}^{K}_{n,V}(S)\vcentcolon= \left\{ 
    \begin{aligned}
         \text{ flat proper morphisms } f\colon \mathcal{X}\rightarrow S \text{, with fibres  }\mathcal{X}_t  \text{ that are} \\
        n\text{-dimensional }K\text{-semistable } \mathbb{Q}\text{-Fano varieties with volume }
        % satisfying Koll{\'a}r's condition  (see, {\cite[24]{kollar2009hulls}})
        V
    \end{aligned}
\right\}.\] In this section, we will define reduced connected components of this K-moduli stack, using varieties $X\cong X_1\times X_2$, and we will prove the global versions of our results (i.e. Theorem \ref{intro_thm1}) by studying the structure of connected components of the K-moduli stack.

Let $X_1$, $X_2$ be two $\Q$-Fano varieties of dimensions $n_i$ and volumes $V_i$, such that the product $X\coloneqq X_1\times X_2$ is a $\Q$-Fano variety of dimension $n = n_1+n_2$ and volume $V = \binom{n_1+n_2}{n_1}V_1\cdot V_2$. We will denote by $\M^K_{X_i}$ the reduced connected component of the K-moduli stack parametrising K-semistable (possibly singular) Fano varieties, which degenerate to the general member $X_i$. In essence $\M^K_{X_i}$ is the reduced connected component of $M_{n_i,V_i}^K$ containing $X_i$. Similarly, we denote by $\mathcal{M}^K_{X}$ the reduced connected component of the K-moduli stack parametrising K-semistable (singular) Fano varieties, which degenerate to the general member $X$. In this section, we will explicitly construct a map 
$$\Product\colon \mathcal{M}^K_{X_1}\times \mathcal{M}^K_{X_2}\rightarrow \mathcal{M}^K_{X}.$$

There is a similar expression for the K-moduli stack $\M^K_{X_i, D_i, c_i}$, which generalises the above, using the notion of $\Q$-Gorenstein log Fano families.

Let us give a more formal definition to the above notions. Let $\chi_i$ be the Hilbert polynomial of elements of the family of deformations $\operatorname{Def}(X_i)$, which is pluri-anticanonically embedded by $-m_iK_{X_i}$ in $\mathbb P^{N_i}$, for some $N_i$, and let $\mathbb H^{X_i; N}\coloneqq \mathrm{Hilb}_{\chi_i}(\mathbb P^{N_i})$. Given a closed subscheme $X_i \subset \mathbb P^{N_i}$ with Hilbert polynomial $\chi(X_i, \mathcal O_{\mathbb P^{N_i}}(k)|_{X_i})=\chi_i(k)$, let $\mathrm{Hilb}(X_i)\in \mathbb H^{\chi_i; N_i}$ denote its Hilbert point.

The Artin stack $\M^K_{X_i}$ is defined as follows: 

% \[\mathcal{M}^{K}_{X_i}(S)\vcentcolon= \left\{ 
%     \begin{aligned}
%          \text{ flat proper morphisms } f\colon \mathcal{X}\rightarrow S\in \operatorname{Def}(X) \\ \text{, with fibres  }\mathcal{X}_t  \text{ that are} 
%         n\text{-dimensional }K\text{-semistable } \\\mathbb{Q}\text{-Fano varieties} 
%         \text{satisfying Koll{\'a}r's condition  (see, {\cite[24]{kollar2009hulls}})}
%     \end{aligned}
% \right\}.\]

\[
{\M}^K_{X_i}(S):=\left\{\X/S\left| \begin{array}{l}\X/S\textrm{ is a $\Q$-Fano family, } \textrm{each fiber $\X_s $}\textrm{ is K-semistable, and }\\ \textrm{$\chi(\X_s,\cO_{\X_s}(-mK_{\X_s}))=\chi_i(m)$} \textrm{ for $m$ } \textrm{sufficiently divisible.}\end{array}\right.\right\}.
\]

The condition $\chi(\X_s,\cO_{\X_s}(-mK_{\X_s}))=\chi_i(m)$, ensures that all fibres of the $\Q$-Fano family $f\colon \X\rightarrow S$ are K-semistable degenerations of the Fano variety $X_i$.

Let
\begin{equation}\label{eq: zi}
    \hat Z_{i,m_i}\coloneqq\left\{ \mathrm{Hilb}(X_i)\in \mathbb H^{\chi_i; N_i} \;\middle|\; 
\begin{aligned}
  & X_i \text{ is a Fano manifold of dimension } n_i \text{, volume } V_i ,\\
  &\mathcal O_{\mathbb{P}^{N_i}}(1)|_{X_i}\sim \mathcal O_{X_i}(-m_iK_{X_i}),\\
  &\text{and }H^0(\mathbb P^{N_i}, \mathcal O_{\mathbb P^{N_i}}(1))\xrightarrow{\cong} H^0(X_i, \mathcal O_{X_i}(-m_iK_{X_i})).
  \end{aligned}
\right\}
\end{equation}
which is a locally closed subscheme of $\mathbb H^{\chi_i; N_i}$. Let $\overline Z_{i,m_i}$ be its Zariski closure in  $\mathbb H^{\chi_i; N_i}$ and $Z_{i,m_i}$ be the subset of $\hat Z_{i,m_i}$ consisting of K-semistable varieties, which are independent of $m$ if $m\gg 0$. By \cite{Li-Wang-Xu} $$\M^K_{X_i}  = [Z_{i}^{\mathrm{red}}/ \PGL(N_{i}+1)],$$
where $Z_i^{\mathrm{red}}$ is the reduced scheme supported on $Z_i$.

Similarly, for $X = X_1\times X_2$, let $\chi$ be the Hilbert polynomial of elements of the family of deformations $\operatorname{Def}(X)$, which is pluri-anticanonically embedded by $-mK_{X}$ in $\mathbb P^{N}$, for some $N$, and let $\mathbb H^{X; N}\coloneqq \mathrm{Hilb}_{\chi}(\mathbb P^{N})$. Given a closed subscheme $X \subset \mathbb P^{N}$ with Hilbert polynomial $\chi(X, \mathcal O_{\mathbb P^{N}}(k)|_{X})=\chi(k)$, let $\mathrm{Hilb}(X)\in \mathbb H^{\chi; N}$ denote its Hilbert point. Then,

$$\M^K_{X}  = [Z^{\mathrm{red}}/ \PGL(N+1)],$$
    where 

    \begin{equation}\label{z:def for prod}
        \hat Z_m\coloneqq\left\{ \mathrm{Hilb}(X)\in \mathbb H^{\chi; N} \;\middle|\; 
\begin{aligned}
  & X\text{ is a Fano manifold of dimension } n_1+n_2,\\
  & \text{volume } V, \mathcal O_{\mathbb{P}^{N}}(1)|_{X}\sim \mathcal O_{X}(-mK_{X}) \text{, and}\\
  &H^0(\mathbb P^{N}, \mathcal O_{\mathbb P^{N}}(1))\xrightarrow{\cong} H^0(X, \mathcal O_{X}(-mK_{X})),
  \end{aligned}
\right\}
    \end{equation}
which is a locally closed subscheme of $\mathbb H^{\chi; N}$. Let $Z_m$ be the subset of $\hat Z_m$ consisting of K-semistable varieties in $\hat Z_m$, and let $Z^{\mathrm{red}}$ be the reduced scheme supported on $Z_m$, which are independent of $m$ if $m\gg 0$.

% Furthermore, by Lemma \ref{products of quotient stacks} we have 
% $$\M^K_{X_1}\times \M^K_{X_2}\cong  \big[(Z_{1}^{\mathrm{red}}\times Z_{2}^{\mathrm{red}})/ \big(\PGL(N_{1}+1)\times \PGL(N_{2}+1)\big)\big].$$

\begin{lemma}\label{products are klt}
    Let $X_1$, $X_2$ be two varieties with klt singularities. Then, the product $X_1\times X_2$ also has klt singularities.
\end{lemma}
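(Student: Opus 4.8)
The plan is to exhibit a single log resolution of $X \coloneqq X_1 \times X_2$ on which every discrepancy strictly exceeds $-1$; since klt-ness can be checked on any one log resolution, this suffices. First I would record the two structural facts that make products tractable. Because $X_1, X_2$ are normal over $\C$ (hence geometrically normal), their product $X$ is normal. Moreover the canonical divisor of a product splits as $K_X \sim_{\Q} p_1^* K_{X_1} + p_2^* K_{X_2}$, where $p_i \colon X \to X_i$ are the projections, coming from $\omega_X \cong p_1^*\omega_{X_1}\otimes p_2^*\omega_{X_2}$. Since each $K_{X_i}$ is $\Q$-Cartier (part of the klt hypothesis on $X_i$), the pullbacks and hence $K_X$ are $\Q$-Cartier, so it makes sense to speak of discrepancies on $X$.

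Next I would pick log resolutions $f_i \colon Y_i \to X_i$ with $K_{Y_i} = f_i^* K_{X_i} + \sum_j a_{ij} E_{ij}$, where all $a_{ij} > -1$ by the klt assumption and $\bigcup_j E_{ij}$ is simple normal crossings. I would then form the product morphism $f \coloneqq f_1 \times f_2 \colon Y \coloneqq Y_1 \times Y_2 \to X$, with projections $q_i \colon Y \to Y_i$. The key claim is that $f$ is again a log resolution: $Y$ is smooth as a product of smooth varieties, $f$ is proper and birational as a product of proper birational morphisms (an isomorphism over the product of the loci where the $f_i$ are isomorphisms), and its exceptional locus is the union of the prime divisors $q_1^* E_{1j} = E_{1j}\times Y_2$ and $q_2^* E_{2j} = Y_1\times E_{2j}$.

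The discrepancy bookkeeping is then immediate. Using $K_Y = q_1^* K_{Y_1} + q_2^* K_{Y_2}$ together with $q_i^* f_i^* K_{X_i} = f^* p_i^* K_{X_i}$, I would substitute the two resolution formulas to get
\[
K_Y = f^*\bigl(p_1^* K_{X_1} + p_2^* K_{X_2}\bigr) + \sum_j a_{1j}\,(E_{1j}\times Y_2) + \sum_j a_{2j}\,(Y_1\times E_{2j}) = f^* K_X + \sum_j a_{1j}\,(E_{1j}\times Y_2) + \sum_j a_{2j}\,(Y_1\times E_{2j}).
\]
Each $q_i^* E_{ij}$ is a reduced prime divisor (pullback of a reduced prime divisor under the smooth projection $q_i$, irreducible since $E_{ij}$ and the complementary factor are irreducible over $\C$), so the discrepancy of $E_{1j}\times Y_2$ equals $a_{1j} > -1$ and that of $Y_1\times E_{2j}$ equals $a_{2j} > -1$. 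Hence every discrepancy exceeds $-1$ and $X$ is klt.

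The main obstacle is the verification that $f_1 \times f_2$ is genuinely a log resolution, concretely that its exceptional divisor is simple normal crossings. This reduces to the local assertion that if $D_1 \subset Y_1$ and $D_2 \subset Y_2$ are SNC, then $(D_1 \times Y_2) \cup (Y_1 \times D_2)$ is SNC in the smooth variety $Y_1 \times Y_2$; one checks this in analytic (or \'etale) local coordinates, where each $D_i$ is a union of coordinate hyperplanes and the product of two such coordinate arrangements is again a coordinate arrangement. Everything else — normality of the product, the splitting of $K_X$, and the additivity of discrepancies — follows routinely from the product structure of the canonical sheaves.
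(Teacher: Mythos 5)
Your proposal follows essentially the same route as the paper's own proof: take log resolutions $f_i\colon Y_i\to X_i$, form the product $f_1\times f_2\colon Y_1\times Y_2\to X_1\times X_2$, and use additivity of canonical divisors to see that every discrepancy of the product resolution is one of the $a_{ij}>-1$. The only difference is that you carefully verify what the paper takes for granted (normality of the product, $\Q$-Cartierness of $K_X$, and that the product of log resolutions is again a log resolution with SNC exceptional locus), which is a welcome tightening rather than a different argument.
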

\begin{proof}
    Let $X_1$, $X_2$ be as above, and let $X \coloneqq X_1\times X_2$. By assumption, $X_1$ and $X_2$  are reduced and $\Q$-Gorenstein, and the same is true for $X$. Let $f\colon Y_i\rightarrow X_i$ be log resolutions for the two varieties. Then we have
    $$K_{Y_i} = f_i^*K_{X_i} + \sum_j \alpha_{i,j}E_{i,j}$$
    where $\alpha_{i,j}>-1$ since $X_i$ are klt. Let $Y \coloneqq Y_1\times Y_2$ and $f = f_1\times f_2$; then, the morphism $f\colon Y\rightarrow X$ is a log resolution of $X$ and we have 
    $$K_Y = p_1^*K_{Y_1}+p_2^*K_{Y_2} = f^*(K_{X_1}+K_{X_2}) + \sum_j(\alpha_{1,j}E_{1,j}\times Y_2+\alpha_{2,j}Y_1\times E_{2,j}),$$
    where $p_i\colon Y\rightarrow Y_i$ are the projection morphisms. Since $\alpha_{i,j}>-1$, and the discrepancy does not depend on the log resolution, $X$ also has klt singularities.
\end{proof}

We will now show that a product of $\Q$-Gorenstein K-semistable log Fano families, is also a $\Q$-Gorenstein K-semistable log Fano family.

\begin{proposition}\label{products of families}
    Let $f_i\colon \X_i \rightarrow B$ be 
    %smoothable $\Q$-Gorenstein K-semistable log Fano families,
    K-semistable $\Q$-Fano families for $i = 1$, $2$. Then, the family $f \colon \X_1\times_B\X_2\rightarrow B$ is also a K-semistable $\Q$-Fano family.
    %$\Q$-Gorenstein K-semistable log Fano family.
\end{proposition}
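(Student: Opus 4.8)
The plan is to verify, one at a time, the defining conditions of a K-semistable $\Q$-Fano family (Definition \ref{Q-Gor K-ss Fano family}) for the fibre product $\X\coloneqq\X_1\times_B\X_2$. Write $p_i\colon\X\to\X_i$ for the two projections, so that $f=f_1\circ p_1=f_2\circ p_2$. Condition (1) is immediate from the fibre-product construction: $p_1$ is the base change of $f_2$ along $f_1$, hence flat and projective, and composing with $f_1$ shows that $f$ is flat and projective of pure relative dimension $n_1+n_2$. For condition (2), the geometric fibre of $f$ over $b\in B$ is the product $(\X_1)_b\times(\X_2)_b$, which is klt by Lemma \ref{products are klt}; moreover $-K_{(\X_1)_b\times(\X_2)_b}=\pi_1^*(-K_{(\X_1)_b})+\pi_2^*(-K_{(\X_2)_b})$ is a sum of pullbacks of ample $\Q$-Cartier divisors under the two projections $\pi_i$ of the fibre, hence ample $\Q$-Cartier, so every fibre is a $\Q$-Fano variety.

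The computational heart of the argument is the relative canonical formula for a fibre product, $\omega_{\X/B}\cong p_1^*\omega_{\X_1/B}\otimes p_2^*\omega_{\X_2/B}$. Since $\Q$-Fano varieties are klt and hence Cohen--Macaulay in characteristic zero, the relative dualizing complexes of the $f_i$ are (shifts of) sheaves, and this formula is the relative K\"unneth identity for dualizing complexes applied to the flat morphisms $f_1,f_2$. Granting it, condition (3) follows at once: $-K_{\X/B}=p_1^*(-K_{\X_1/B})+p_2^*(-K_{\X_2/B})$ is $\Q$-Cartier because each summand is, and its restriction to every fibre is the sum of pullbacks of ample divisors on the two factors, hence ample; since relative ampleness over a proper base may be checked fibrewise, $-K_{\X/B}$ is $f$-ample.

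Condition (4), Kollár's condition, is the step I expect to be the main obstacle. The goal is to upgrade the relative canonical formula to the reflexive powers, $\omega_{\X/B}^{[m]}\cong\bigl(p_1^*\omega_{\X_1/B}^{[m]}\otimes p_2^*\omega_{\X_2/B}^{[m]}\bigr)^{\ast\ast}$, and then to verify that the formation of these sheaves commutes with arbitrary base change $B'\to B$. I would use that each $f_i$ already satisfies Kollár's condition, so that $\omega_{\X_i/B}^{[m]}$ commutes with base change, together with the fact that the non-$\Q$-Cartier (and singular) locus of the product is governed by those of the factors through the product structure, so that reflexivity and base-change compatibility propagate through the tensor product of the two pullbacks. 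The delicate part is controlling the reflexive hull along the product of the singular loci and checking that no additional base-change obstruction is introduced there; this is precisely where the hypotheses on the individual families $f_i$ must be invoked (cf. \cite{kollar2009hulls}).

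Finally, the extra K-semistability condition is immediate from Theorem \ref{zhuang_thm}: every geometric fibre $(\X_1)_b\times(\X_2)_b$ of $f$ is a product of K-semistable $\Q$-Fano varieties and is therefore itself K-semistable. Assembling these verifications shows that $f\colon\X_1\times_B\X_2\to B$ is a K-semistable $\Q$-Fano family.
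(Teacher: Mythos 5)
Your overall skeleton matches the paper's: verify the conditions of Definition \ref{Q-Gor K-ss Fano family} fibrewise, get klt fibres from Lemma \ref{products are klt}, and get K-semistability of the fibres from Theorem \ref{zhuang_thm}; your treatment of conditions (1)--(3) is fine (one quibble: relative ampleness is checked fibrewise for a \emph{proper flat morphism} over an arbitrary base -- properness of $B$ is neither needed nor available here). But the step you yourself flag as ``the main obstacle,'' Koll\'ar's condition, is exactly where your argument stops being a proof. You state the goal -- that $\omega^{[m]}_{\X_1\times_B\X_2/B}$ should be the reflexive hull of $p_1^*\omega^{[m]}_{\X_1/B}\otimes p_2^*\omega^{[m]}_{\X_2/B}$ and should commute with arbitrary base change -- and then say which hypotheses ``must be invoked,'' conceding that controlling the reflexive hull along the product of the singular loci is left open. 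That concession is the whole content of the step: reflexive hulls do not commute with base change in general, and tensor products of pullbacks of reflexive sheaves need not be reflexive, so there is no formal ``propagation'' argument; a genuine theorem is required.

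The paper closes precisely this hole by citation: by \cite[Proposition 2.12]{Bhatt_Ho_Patakfalvi_Schnell_2013} together with \cite[Lemma 7.3]{Kovacs_2009}, the sheaves $\omega^{[k]}_{\X_1\times_B\X_2/B}$ commute with arbitrary base change, which is Koll\'ar's condition for the fibre product family. These results are proved by the K\"unneth-for-dualizing-complexes and depth arguments you gesture at, but they are nontrivial, and beyond Theorem \ref{zhuang_thm} they are the real mathematical content of the proposition. To complete your write-up you should either invoke those results explicitly or reprove them; as written, condition (4) is asserted rather than established.
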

\begin{proof}
    Let $\Y\coloneqq \X_1\times_B\X_2$. Since $f_i$ is a K-semistable $\Q$-Fano family, 
%    $\Q$-Gorenstein K-semistable log Fano family, 
for $i = 1$, $2$, the fibres $\X_{i,b}$ are K-semistable Fano varieties with klt singularities. By Lemma \ref{products are klt}, we see that the product $\X_{1,b}\times \X_{2,b}$ also has klt singularities. Furthermore, by Theorem \ref{zhuang_thm} since $\X_{i,b}$ is K-semistable for $i = 1$, $2$, the product $\Y_b\coloneqq \X_{1,b}\times_B\X_{2,b}$ is also K-semistable. By
\cite[Proposition 2.12]{Bhatt_Ho_Patakfalvi_Schnell_2013} and \cite[Lemma 7.3]{Kovacs_2009}, the sheaf $\omega^{[k]}_{\Y/B}$ commutes with arbitrary base change, and hence it satisfies Koll{\'a}r's condition. Hence, $f$ is a K-semistable $\Q$-Fano family.
\end{proof}

Let $f_i\colon (\X_i,c_i\D_i)\rightarrow B$, for $i=1$, $2$ be two $\Q$-Gorenstein log Fano families. As in Proposition \ref{products of families} we can define the fibre product family $f\colon (\X_1\times_B\X_2, \mathbf{c}\D )\rightarrow B$, where $\mathbf{c}\D \coloneqq c_1D_1\boxtimes c_2D_2 = \sum_{i=1}^2p_i^*(c_i\D_i)$, and $p_i\colon \X_1\times_B\X_2\rightarrow \X_i$ are the two projections. 
% We will show that $f$ is also a $\Q$-Gorenstein log Fano family.

\begin{remark}\label{rmk: products of Q-gor smooth families}
    The same method of proof as in Proposition \ref{products of families} shows that products of $\Q$-Gorenstein log Fano families are also $\Q$-Gorenstein log Fano families, but we omit the proof.
\end{remark}

% \begin{proposition}\label{products of Q-gor smooth families}
%     Let $f_i\colon (\X_i,c_i\D_i)\rightarrow B$, for $i=1$, $2$ be two $\Q$-Gorenstein smoothable log Fano families with K-semistable fibres. Then $f\colon (\X_1\times_B\X_2, \mathbf{c}\D )\rightarrow B$ is also a $\Q$-Gorenstein smoothable log Fano family with K-semistable fibres.
% \end{proposition}
% \begin{proof}
%     $f$ is a $\Q$-Fano family from Proposition \ref{products of families}. Since the $f_i$ are $\Q$-Gorenstein smoothable log Fano families with K-semistable fibres, the fibres $(\X_{i,b},c_i\D_{i,b})$ are K-semistable, and hence the log Fano pair $(\X_{b}, \mathbf{c}\D_{b})$ is K-semistable by Theorem \ref{zhuang_thm}. Since each fibre $(\X_{i,b},c_i\D_{i,b})$ is a $\Q$-Gorenstein smoothable log Fano pair, $\mathbf{c}\D_b = c_1p_1^*\D_{1,b}+c_2p_2^^*\D_{2,b} \sim_{f,\Q} -r_1p_1^*K_{\X_{1,b}/B}-r_2p_2^*K_{\X_{2,b}/B}\sim_{f,\Q} -rK_{\X/B}$. Furthermore, by Lemma \ref{products are klt}, $\X_{0}$ has klt singularities, and $f$, $f|_{\D_b}$ are smooth morphisms over $B\setminus \{0\}$. Hence, $f$ is a $\Q$-Gorenstein smoothable log Fano family with K-semistable fibres.
% \end{proof}

\begin{definition}\label{product map def}
    Let $X_i$ be two Fano varieties of dimensions $n_i$ and volumes, $V_i\in \N$. The product map 
    $$\Product\colon \M^K_{X_1}\times \M^K_{X_2}\longrightarrow \M^K_{X},$$
where $X$ has dimension $n_1+n_2$ and volume $V = \binom{n_1+n_2}{n_1}V_1\cdot V_2$, 
    % For any two K-semistable Fano varieties $X_1$, $X_2$, the map 
    % $$\Product\colon \mathcal{M}^K_{X_1}\times \mathcal{M}^K_{X_2}\longrightarrow \mathcal{M}^K_{X}$$
    is defined by taking fibre products of K-semistable $\Q$-Fano families.
    The product map also descends to a product map $\overline{\Product}$ on the K-moduli spaces 
    $$\overline{\Product}\colon M^K_{X_1}\times M^K_{X_2}\longrightarrow M^K_{X}.$$
\end{definition}

% Key question here: maybe we need to briefly amend definition of rhs stack, to reflect we ONLY consider products?

We will similarly define the product map for log Fano pairs:

\begin{definition}\label{product map pairs}
    Let ${\M}^K_{X_i, D_i, c_i}$ be two K-moduli stacks for $i=1$, $2$. The product map 
    $$\Product_{c_1,c_2}\colon {\M}^K_{X_1, D_1, c_1}\times {\M}^K_{X_2, D_2, c_2}\longrightarrow {\M}^K_{X_1\times X_2, D, \mathbf{c}}$$
    is defined by taking fibre products of $\Q$-Gorenstein log Fano families with K-semistable fibres. The map descends to a map 
    $$\overline{\Product}_{c_1,c_2}\colon {M}^K_{X_1, D_1, c_1}\times {M}^K_{X_2, D_2, c_2}\longrightarrow {M}^K_{X_1\times X_2, D, \mathbf{c}}$$
    at the level of good moduli spaces.
\end{definition}

\begin{remark}\label{def of map via Zhuang}
    At the level of $\C$-points and pseudofunctors, as in Theorem \ref{K-moduli theorem}, this map is defined explicitly as 
    \begin{equation*}
        \begin{split}
           \Product\colon \M^K_{X_1}\times \M^K_{X_2}&\longrightarrow \M^K_{X}\\
            ([Y],[Z])&\longmapsto [Y\times Z],  
        \end{split}
    \end{equation*}
    where $Y\times Z$ is K-semistable by Theorem \ref{zhuang_thm}. The map on K-moduli spaces is similarly defined by sending $([Y],[Z])\mapsto [Y\times Z]$, where again $Y\times Z$ is K-polystable by Theorem \ref{zhuang_thm}. The map on K-moduli stacks and K-moduli spaces of log pairs is also similarly defined.
\end{remark}

% General comment for me: Maybe the definition of the map above should be more general? Instead of defining it in terms of $X_1$ and $X_2$, we can define it for any two arbitrary K-moduli stacks, since we have shown a product of K-ss $\Q$-Fano families is K-ss $\Q$-Fano family, AND a product of Fanos is always Fano so this is well-defined. Then define the map as 

\section{Properties of the product map}\label{sec: properties of prod}
The first result regarding products of quotient stacks is well known to experts.

\begin{proposition}\label{products of quotient stacks}
    Let $\X \coloneqq[X/G]\rightarrow S$, $\Y\coloneqq [Y/H]\rightarrow S$ be two quotient stacks. Then, $\X\times_S\Y\cong [X\times Y/(G\times H)]$ is also a quotient stack.
\end{proposition}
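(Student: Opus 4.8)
The plan is to establish the isomorphism by comparing the groupoids of $T$-points of the two stacks, functorially in a test scheme $T$, using the torsor description of quotient stacks; throughout, the product $X\times Y$ is understood fibrewise over $S$, i.e. as $X\times_S Y$. Recall that $[X/G](T)$ is the groupoid whose objects are pairs $(P\to T,\varphi)$ with $P\to T$ a $G$-torsor and $\varphi\colon P\to X$ a $G$-equivariant morphism; the structure map to $S$ is induced by the $G$-invariant composite $P\to X\to S$, and morphisms are isomorphisms of torsors commuting with the maps to $X$. The groupoids $[Y/H](T)$ and $[(X\times_S Y)/(G\times H)](T)$ are described analogously, and an object of $(\X\times_S\Y)(T)$ is a pair of objects of $\X(T)$ and $\Y(T)$ whose images in the discrete groupoid $S(T)$ agree.

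The key step is a decomposition lemma for $(G\times H)$-torsors. Given a $(G\times H)$-torsor $R\to T$, I set $P\coloneqq R/H$ and $Q\coloneqq R/G$; then $P\to T$ is a $G$-torsor, $Q\to T$ is an $H$-torsor, and the canonical map $R\to P\times_T Q$ is an isomorphism of $(G\times H)$-torsors. Conversely, for a $G$-torsor $P$ and an $H$-torsor $Q$ over $T$, the fibre product $P\times_T Q$ carries a natural $(G\times H)$-action making it a $(G\times H)$-torsor. These two constructions are mutually quasi-inverse and functorial in $T$, yielding an equivalence between the groupoid of $(G\times H)$-torsors over $T$ and the product of the groupoids of $G$-torsors and of $H$-torsors over $T$.

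It remains to incorporate the equivariant maps to $X\times_S Y$. Under the identification $R\cong P\times_T Q$, a $(G\times H)$-equivariant morphism $R\to X\times_S Y$ is the same as a pair of morphisms $R\to X$ and $R\to Y$ with equal image in $S$. The first is $G$-equivariant and $H$-invariant, hence factors uniquely through $R/H=P$ to give a $G$-equivariant map $\varphi\colon P\to X$; symmetrically the second factors through $R/G=Q$ to give an $H$-equivariant map $\psi\colon Q\to Y$. The common image in $S$ is precisely the condition that $(P,\varphi)$ and $(Q,\psi)$ define the same point of $S(T)$, so the resulting data is exactly an object of $(\X\times_S\Y)(T)$. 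Checking that this assignment respects isomorphisms of torsors shows it is fully faithful and essentially surjective, giving an equivalence of groupoids natural in $T$, and therefore an isomorphism of stacks.

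The only genuine content lies in the torsor decomposition and in the factorisation of equivariant maps through the quotient torsors; the remaining verifications (naturality in $T$ and the correspondence of morphisms) are routine. A more economical alternative proceeds via atlases: the smooth surjection $X\times_S Y\to\X\times_S\Y$ is an atlas whose associated groupoid is $(X\times_S Y)\times_{\X\times_S\Y}(X\times_S Y)\cong(X\times G)\times_S(Y\times H)\cong(X\times_S Y)\times(G\times H)$, using $X\times_{[X/G]}X\cong X\times G$ and the analogue for $Y$. This is precisely the groupoid presenting $[(X\times_S Y)/(G\times H)]$, and since a quotient stack is recovered from such a presentation, the claimed isomorphism follows.
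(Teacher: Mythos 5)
Your proof is correct, but there is nothing in the paper to compare it against: the paper states this proposition as ``well known to experts'' and gives no proof at all, so any complete argument is necessarily a different route. Both of your arguments are standard and sound. The torsor-theoretic one --- decomposing a $(G\times H)$-torsor $R\to T$ as $R/H\times_T R/G$ and factoring a $(G\times H)$-equivariant map $R\to X\times_S Y$ through the two quotient torsors --- is the careful functor-of-points verification; your observation that agreement of images in $S(T)$ (a set, viewed as a discrete groupoid, since $S$ is a scheme) is precisely the $2$-fibre-product condition is the step most write-ups gloss over. The atlas argument is the quicker route: identify $(X\times_S Y)\times_{\X\times_S\Y}(X\times_S Y)$ with the action groupoid $(X\times_S Y)\times (G\times H)$ via $X\times_{[X/G]}X\cong X\times G$, and conclude that both stacks are the stackification of the same smooth groupoid; to be fully rigorous one should note that this identification respects the groupoid structure maps (source, target, composition), which it does because it is induced by the canonical projections. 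Two further points in your favour: you correctly flag that $[X\times Y/(G\times H)]$ must be read with $X\times Y = X\times_S Y$ for the statement to be true as an isomorphism with $\X\times_S\Y$ (immaterial in the paper's applications, where $S=\operatorname{Spec}\C$); and the only technical caveat in your first argument --- existence of the quotients $R/H$ and $R/G$ as algebraic spaces so that they define torsors --- is routine for the affine smooth groups relevant here, such as $\PGL(N+1)$.
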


We will now show that for two quotient stacks $\X$ and $\Y$, admitting good moduli spaces $\pi_{\X}\colon\X\rightarrow X$ and $\pi_{\Y}\colon\Y\rightarrow Y$ respectively, the product $X\times_S Y$ of the good moduli spaces is in fact the good moduli space of the fibre product of the stacks $\X\times_S \Y$. The below statement is also shown in \cite[Lemma 4.15]{Alper_good}, but we provide an alternative proof.

\begin{proposition}\label{product of good moduli is good moduli}
    Let $\X$ and $\Y$ be two quotient stacks with good moduli spaces $\pi_{\X}\colon \X\rightarrow X$ and $\pi_{\Y}\colon\Y\rightarrow Y$ respectively. Then the product $\X\times_S\Y$ is a quotient stack that admits good moduli space $X\times_S Y$.
\end{proposition}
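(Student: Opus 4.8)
The plan is to reduce everything to the two defining properties of a good moduli space and to exploit a factorisation of $\pi\coloneqq\pi_{\X}\times_S\pi_{\Y}$ into base changes of the two given morphisms. Writing $\X=[V/G]$ and $\Y=[W/H]$, Proposition \ref{products of quotient stacks} already gives $\X\times_S\Y\cong[(V\times_S W)/(G\times H)]$, so this is again a quotient stack and $X\times_S Y$ is an algebraic space; it therefore remains only to check that
\[
\pi\colon \X\times_S\Y\longrightarrow X\times_S Y
\]
is cohomologically affine and that $\mathcal{O}_{X\times_S Y}\to\pi_*\mathcal{O}_{\X\times_S\Y}$ is an isomorphism. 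First I would factor $\pi=h\circ g$ as
\[
\X\times_S\Y \xrightarrow{\ g\ } X\times_S\Y \xrightarrow{\ h\ } X\times_S Y,
\]
with $g=\pi_{\X}\times_S\mathrm{id}_{\Y}$ and $h=\mathrm{id}_X\times_S\pi_{\Y}$. Using $\X\times_S\Y\cong\X\times_X(X\times_S\Y)$ and $X\times_S\Y\cong\Y\times_Y(X\times_S Y)$, the map $g$ is the base change of $\pi_{\X}$ along the projection $X\times_S\Y\to X$, and $h$ is the base change of $\pi_{\Y}$ along the projection $X\times_S Y\to Y$.

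For cohomological affineness I would invoke that this property is stable under arbitrary base change (including base change to algebraic stacks) and under composition \cite{Alper_good}. Since $\pi_{\X}$ and $\pi_{\Y}$ are cohomologically affine, so are $g$ and $h$, and hence so is $\pi=h\circ g$.

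For the structure-sheaf condition I would write $\pi_*=h_*g_*$ and treat the two factors separately. The morphism $h$ is the base change of $\pi_{\Y}$ along a morphism of \emph{algebraic spaces} $X\times_S Y\to Y$, so by Alper's stability of good moduli spaces under base change \cite{Alper_good} it is itself a good moduli space, giving $h_*\mathcal{O}_{X\times_S\Y}=\mathcal{O}_{X\times_S Y}$. For $g$ the target $X\times_S\Y$ is a stack, so I would verify $\mathcal{O}_{X\times_S\Y}\to g_*\mathcal{O}_{\X\times_S\Y}$ smooth-locally on the target: pulling back along the smooth atlas $X\times_S W\to X\times_S\Y\cong[(X\times_S W)/H]$ turns $g$ into $\pi_{\X}\times_S\mathrm{id}_W\colon \X\times_S W\to X\times_S W$, which is the base change of $\pi_{\X}$ along the morphism of algebraic spaces $X\times_S W\to X$ and is therefore a good moduli space by the same result. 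Since smooth (hence flat) base change commutes with pushforward of quasi-coherent sheaves, this yields $g_*\mathcal{O}_{\X\times_S\Y}=\mathcal{O}_{X\times_S\Y}$, and combining gives $\pi_*\mathcal{O}_{\X\times_S\Y}=h_*g_*\mathcal{O}_{\X\times_S\Y}=h_*\mathcal{O}_{X\times_S\Y}=\mathcal{O}_{X\times_S Y}$, as required.

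The step I expect to be the main obstacle is precisely this last computation for $g$: the intermediate target $X\times_S\Y$ is a stack, so Alper's base-change theorem does not apply directly, and the structural morphism $X\times_S\Y\to X$ is far from flat, so pushforward does not obviously commute with it. The smooth-local reduction above is designed to circumvent exactly this difficulty. As an alternative resolution I would note that one can instead pass to an \'etale-local linearly reductive model $\X\cong[\operatorname{Spec}A/G]$, $\Y\cong[\operatorname{Spec}B/H]$ and deduce the identity from $(A\otimes_{\mathcal{O}_S}B)^{G\times H}=A^G\otimes_{\mathcal{O}_S}B^H$, which holds because the Reynolds operators of $G$ and $H$ split off the respective invariants; this makes transparent why the good moduli space of the product is the product of the good moduli spaces.
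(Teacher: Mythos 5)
Your proof is correct, and it takes a genuinely different route from the paper's. The paper likewise starts from Proposition \ref{products of quotient stacks}, but then argues with a commutative cube: it base-changes $\pi_{\X}$ along $X\times_S Y\to X$ and $\pi_{\Y}$ along $X\times_S Y\to Y$ to obtain two good moduli spaces over $X\times_S Y$, and then identifies both sources with $\X\times_S\Y$, using that the back face of the cube is Cartesian and that the induced map $a\colon \X\times_S\Y\to X\times_S Y$ is surjective, to conclude that $a$ is itself a good moduli space. Your factorisation $\pi=h\circ g$ through the intermediate stack $X\times_S\Y$ avoids that identification entirely, and this is precisely where your write-up is more solid: $\X\times_S\Y$ is not a base change of $\pi_{\X}$ or $\pi_{\Y}$ along a morphism of \emph{algebraic spaces}, and the paper's chain $\X\times_X(X\times_S Y)\cong\Y\times_Y(X\times_S Y)\cong\X\times_S\Y$ conflates $\X\times_S Y$ and $X\times_S\Y$ with $\X\times_S\Y$ (already for $\X=X=S$ a point, $\Y=B\mG_m$ and $Y=\mathrm{pt}$, the first of these is a point while the last is $B\mG_m$). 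What is true --- and what both arguments ultimately rest on --- is that $a$ is the composite of the two base-changed morphisms, and your smooth-local computation of $g_*\mathcal{O}_{\X\times_S\Y}$ on the atlas $X\times_S W$ supplies exactly the justification needed for the base change whose target is a stack. Two minor precisions you should add: stability of cohomological affineness under base change is not unconditional (Alper requires the base of the original morphism to have quasi-affine diagonal; this is harmless here, since you only base-change $\pi_{\X}$ and $\pi_{\Y}$, whose targets are algebraic spaces), and commuting $g_*$ with smooth base change uses that $g$ is quasi-compact and quasi-separated, which holds because it is a base change of the cohomologically affine $\pi_{\X}$. A further benefit of your route: outside of the quotient-stack claim, it never uses the presentations $[V/G]$, $[W/H]$ --- any smooth atlas of $\Y$ would do --- so it proves the statement for arbitrary Artin stacks admitting good moduli spaces, which is the generality the paper invokes later in Propositions \ref{prop: gms of products of artin stacks} and \ref{s2-gerbe for stacks}; your Reynolds-operator alternative, though only sketched, points at the same \'etale-local reduction used there.
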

\begin{proof}
From Proposition \ref{products of quotient stacks}, we see that the stack $\X\times_S\Y$ is a quotient stack. Consider the following commutative cube diagram
\begin{center}
    \begin{tikzcd}[row sep=2.5em]
\X\times_S\Y \arrow[rr,"pr_2"] \arrow[dr,swap,"a"] \arrow[dd,swap,"pr_1"] &&
  \Y \arrow[dd,swap, near start] \arrow[dr,"\pi_{\Y}"] \\
& X\times_S Y \arrow[rr,crossing over,"pr_2" near start] &&
  Y \arrow[dd] \\
\X \arrow[rr, near end] \arrow[dr,swap,"\pi_{\X}"] && S \arrow[dr,swap,"="] \\
& X \arrow[rr] \arrow[uu,<-,crossing over,"pr_1" near end]&& S,
\end{tikzcd}
\end{center}
where $pr_i$ are the corresponding projections, $\pi_{\X}$ and $\pi_{\Y}$ are the good moduli space maps, and the map $a$ exists due to the universal property of fibre products. Furthermore, the back and front faces of this diagram are Cartesian, and since $pr_i$, $\pi_{\X}$ and $\pi_{\Y}$ are surjective, so is $a$.

Considering the left-hand side face of the diagram, and due to the base change property of good moduli spaces, we have that $\X\times_X (X\times_S Y) \rightarrow X\times_S Y$ is a good moduli space. Similarly, by considering the top face of the diagram, we have that $\Y\times_Y (X\times_S Y) \rightarrow X\times_S Y$ is also a good moduli space. Furthermore, since the back face of the diagram is Cartesian and since $a$ is surjective, we have $\X\times_X (X\times_S Y)\cong \Y\times_Y (X\times_S Y) \cong \X \times_S \Y$. Hence, the map $a: \X \times_S \Y\rightarrow X\times_S Y$ is a good moduli space.
\end{proof}

The above proposition, along with Definition \ref{product map def} shows that there exists a commutative diagram 
\begin{center}
    \begin{tikzcd}
    \M^K_{X_1}\times \M^K_{X_2}\arrow[r, "\Product"]\arrow[d] & \M^K_{X}\arrow[d]\\
    M^K_{X_1}\times M^K_{X_2}\arrow[r, "\overline{\Product}"] & M^K_{X} 
    \end{tikzcd}
\end{center}
where and $M^K_{X_1}$,  $M^K_{X_2}$ and $M^K_{X}$ are the K-moduli spaces of the K-moduli stacks $\M^K_{X_1}$, $\M^K_{X_2}$ and $\M^K_{X}$, respectively. The $\Product$ map at the level of good moduli spaces is defined by  $([Y],[Z])\mapsto [Y\times Z]$, where $Y$, $Z$ are K-polystable Fano varieties.
% of dimensions $n$ and $m$ and volumes $V_1$, $V_2$, respectively.

% Point for me here. There doesn't seem much to prove for me here... but let's carry on. 

% fix below accordingly

% Let $\chi_i$ be the Hilbert polynomial of smooth elements of the family of deformations $\operatorname{Def}(X_i)$, which is pluri-anticanonically embedded by $-m_iK_{X_i}$ in $\mathbb P^{N_i}$, for some $N_i$, and let $\mathbb H^{X_i; N}\coloneqq \mathrm{Hilb}_{\chi_i}(\mathbb P^{N_i})$. Given a closed subscheme $X_i \subset \mathbb P^{N_i}$ with Hilbert polynomial $\chi(X_i, \mathcal O_{\mathbb P^{N_i}}(k)|_{X_i})=\chi_i(k)$, let $\mathrm{Hilb}(X_i)\in \mathbb H^{\chi_i; N_i}$ denote its Hilbert point. Let
% $$\hat Z_{i,m}\coloneqq\left\{ \mathrm{Hilb}(X_i)\in \mathbb H^{\chi_i; N_i} \;\middle|\; 
% \begin{aligned}
%   & X_i \text{ is a Fano manifold of dimension } n_i \text{, volume } V_i ,\\
%   &\mathcal O_{\mathbb{P}^{N_i}}(1)|_{X_i}\sim \mathcal O_{X_i}(-m_iK_{X_i}),\\
%   &\text{and }H^0(\mathbb P^{N_i}, \mathcal O_{\mathbb P^{N_i}}(1))\xrightarrow{\cong} H^0(X_i, \mathcal O_{X_i}(-m_iK_{X_i})).
%   \end{aligned}
% \right\}$$
% which is a locally closed subscheme of $\mathbb H^{\chi_i; N_i}$. Let $\overline Z_{i,m}$ be its Zariski closure in  $\mathbb H^{\chi_i; N_i}$ and $Z_{i,m}$ be the subset of $\hat Z_{i,m}$ consisting of K-semistable varieties. For $m\gg >0$, the choice of $m$ is irrelevant, hence we will write $Z_i$. By \cite{Li-Wang-Xu} 
Recall that
$$\M^K_{X_i}  = [Z_{i}^{\mathrm{red}}/ \PGL(N_{i}+1)],$$
where $Z_i^{\mathrm{red}}$ is the reduced scheme supported on $Z_i$, which is defined in Equation \eqref{eq: zi}. Then, by Lemma \ref{products of quotient stacks} we have 
$$\M^K_{X_1}\times \M^K_{X_2}\cong  \big[(Z_{1}^{\mathrm{red}}\times Z_{2}^{\mathrm{red}})/ \big(\PGL(N_{1}+1)\times \PGL(N_{2}+1)\big)\big].$$

% \begin{proposition}
%     The map $\Product$ is {\'e}tale.
% \end{proposition}
% \begin{proof}
%     dfefefe
% \end{proof}

\begin{theorem}\label{main_thm}
    Let $X_1\not\cong X_2$ be two K-semistable Fano varieties with different simple components in their product decompositions. Then the map $\Product$ is an isomorphism of stacks which descends to an isomorphism of good moduli spaces. In particular, $\M^K_{X_1}\times \M^K_{X_2}\cong \M^K_{X}$ and $M^K_{X_1}\times M^K_{X_2}\cong M^K_{X}$.
\end{theorem}
\begin{proof}
    We will show that $\Product$ is an isomorphism. By Zariski’s Main Theorem of stacks, it suffices to show that $\Product$ is a finite open immersion. We will prove $\Product$ is finite using \cite[Proposition 6.4]{Alper_good}, which requires showing that the morphism on the level of good moduli spaces is finite,and that $\Product$ is representable, separated, quasi-finite, and sends closed points to closed points. Notice that, for two closed points $[x_i]\in \M^K_{X_i}$, the point $[x_1\times x_2] \in  \M^K_{X_1}\times  \M^K_{X_2}$ is closed by Theorem \ref{zhuang_thm}, hence the map $\Product$ sends closed points to closed points. 

    To show that the map $\Product$ is separated we need to show that two pairs of families $(f_1\colon \X_1\rightarrow B, f_2\colon \X_2\rightarrow B)$ and $(f'_1\colon \X'_1\rightarrow B, f'_2\colon \X'_2\rightarrow B)$ which agree over the general fiber, i.e. $(\X_{1,t}, \X_{2,t})\cong (\X'_{1,t}, \X'_{2,t})$ for $t\neq 0$, and have the same image in $\M^K_{X}$, degenerate to the same closed fiber $(\X_{1,0}, \X_{2,0})$ in $\M^K_{X_1}\times \M^K_{X_2}$. Recall that $M^K_{X_i}$ is separated, and that since $f_i\colon \X_i\rightarrow B$ and $f'_i\colon \X'_i\rightarrow B$ are families such that $\X_{i,t}\cong \X'_{i,t}$, the families degenerate to the same closed fibre $\X_{i,t}\cong \X'_{i,0}$ in $\M^K_{X_i}$. Furthermore, since $\Product$ sends closed points to closed points, the fibre $(\X_{1,t}, \X_{2,t})\cong (\X'_{1,t}, \X'_{2,t})$ is closed in $M^K_{X}$. Let $\X = \X_1\times \X_2$ and $\X' = \X_1'\times \X_2'$. As in the proof of Theorem \ref{optimal result} we have $\Pic(\X)\cong \Pic(\X')\cong \Pic(X)$. Let $\L$, $\L'$ be ample line bundles in $\X$ and $\X'$ respectively. By the above argument we can identify $\L\cong \L'$, and hence 
    $$\X \cong \operatorname{Proj}\bigoplus_{k}\mathrm{H}^0(\X,\L^k)\cong \operatorname{Proj}\bigoplus_{k}\mathrm{H}^0(\X',(\L')^k)\cong \X'.$$
    Hence, both families $f_1\times f_2$ and $f'_1\times f'_2$ degenerate to the same closed fibre, and are isomorphic, thus $\Product$ is separated.
    %%% todo: do I need to say anything more about why families are the same here?? Does it not follow immediatelly that the families are isomorphic? Need to check

    We next tackle representability. For this, we will show that the map on stabilisers at the closed points is injective. The map is given by $f\colon (\Aut(Y), \Aut(Z)) \rightarrow \Aut(Y\times Z)$, where $[Y]$ is a closed point in $\M^K_{X_1}$ and $[Z]$ is a closed point in $\M^K_{X_2}$. Since we assume $X_1\not\cong X_2$ the map $f$ is injective. Furthermore, since $X_1\not\cong X_2$ (i.e. we do not allow pairs $([Y],[Z])\not\cong ([Z], [Y])$ such that they both map to $[Y\times Z]\cong [Z\times Y]$) the map $\Product$ is injective by construction. To see this more clearly, suppose that $([Y],[Z]), ([Y'],[Z'])\in \M^K_{X_1}\times \M^K_{X_2}$ are two points such that $$\Product(([Y],[Z])) = [Y\times Z] \cong [Y'\times Z'] = \Product(([Y'],[Z'])).$$
    This implies that $Y\times Z$ and $Y'\times Z'$ are $S$-equivalent, i.e. they degenerate to a common K-semistable Fano variety $A$ via special test configurations. Suppose that $([Y],[Z])\not\cong ([Y'],[Z'])$; then, either $[Y]\not \cong [Y']$ or $[Z]\not \cong [Z']$ (or both). Let's assume that $[Y]\not \cong [Y']$ but $[Z]\cong [Z']$; then $Y$ and $Y'$ are not S-equivalent, i.e. they degenerate to different K-semistable Fano varieties $A_Y$ and $A_{Y'}$, of dimension $n_1$ and  volume $V_1$, respectively, while $Z$ and $Z'$ are S-equivalent, i.e., they degenerate to a common K-semistable Fano variety $A_Z$ of dimension $n_2$ and  volume $V_2$. Then, by Theorem \ref{zhuang_thm}, $Y\times Z$ degenerates to the K-semistable Fano variety $A_Y\times A_Z$ via special test configurations, while $Y'\times Z'$ degenerates to the K-semistable Fano variety $A_{Y'}\times A_{Z}$ via special test configurations, where $A_Y\times A_Z\not\cong A_{Y'}\times A_{Z}$. But, this contradicts the assumption $[Y\times Z] \cong [Y'\times Z']$. Hence, $\Product$ is injective. Furthermore, the descended map on the good moduli spaces $\overline{\Product}$ is also injective. This argument, along with Lemma \ref{lem:deform Fano product}, Theorem \ref{structure theorem for Fanos} and Theorem \ref{optimal result}, shows that $\Product$ is an open immersion of stacks, and $\overline{\Product}$ is an open immersion of schemes. Since we assume $X_1$ and $X_2$ to have different simple components in their product decompositions, and small deformations of products of K-semistable Fano type varieties is still a product. Lemma \ref{lem:deform Fano product} proves openness in the analytic topology, but we note that the map is an open immersion of stacks in general, via [Cor. 2.3, Exp. XII]\cite{grothendieck}.
    The above argument also shows that $\Product$ is an {\'e}tale morphism of stacks, since it is an open immersion.

    % (QUESTION: DO THESE DEGENERATIONS HAVE TO BE UNIQUE? WE SHOULD CHECK)

    We will now show quasi-finiteness. Recall that 
    $$\M^K_{X}  = [Z^{\mathrm{red}}/ \PGL(N+1)],$$
    where $Z^{\mathrm{red}}$ is defined in Equation \eqref{z:def for prod}.
%     where 
%     $$\hat Z\coloneqq\left\{ \mathrm{Hilb}(X\times Y)\in \mathbb H^{\chi_1\times\chi_2; N = N_1+N_2} \;\middle|\; 
% \begin{aligned}
%   & X\times Y \text{ is a Fano manifold of dimension } n_1+n_2,\\
%   & \text{volume } V, \mathcal O_{\mathbb{P}^{N}}(1)|_{X\times Y}\sim \mathcal O_{X\times Y}(-mK_{X\times Y}) \text{, and}\\
%   &H^0(\mathbb P^{N}, \mathcal O_{\mathbb P^{N}}(1))\xrightarrow{\cong} H^0(X\times Y, \mathcal O_{X\times Y}(-mK_{X\times Y})),
%   \end{aligned}
% \right\}$$
% and $Z$ is the subset of $\hat Z$ consisting of K-semistable varieties with $Z^{\mathrm{red}}$ the reduced scheme supported on $Z$. 
Consider the morphisms $Z_i^{\mathrm{red}} \rightarrow [Z_i^{\mathrm{red}}/ \PGL(N_i+1)]$, and $\PGL(N_i+1)$-torsors $P_i \rightarrow Z_i^{\mathrm{red}}$, whose fibres over $X_i \in Z_i^{\mathrm{red}}$ are $(X_i, [s_0,\dots, s_{N_i}])$, where the $s_j$ are a basis for $H^0(X_i, \cO_{X_i}(-m_iK_{X_i}))$. Consider $P\coloneq P_1\times P_2$. Then, there exists a $\PGL(N_1+1)\times \PGL(N_2+1)$-equivariant morphism $P\rightarrow Z^{\mathrm{red}}$. We have the following diagram:

\begin{center}
        \begin{tikzcd}
        P \arrow[r, "g"]\arrow[d]\ar[rr,bend left=35] &\left[P/\big(\PGL(N_1+1)\times \PGL(N_2+1)\big)\right]\arrow[d]\arrow[r, "f"] &Z^{\mathrm{red}}\arrow[d]\\
        Z_1^{\mathrm{red}}\times Z_2^{\mathrm{red}} \arrow[r] & \left[Z_1^{\mathrm{red}}\times Z_2^{\mathrm{red}}/ \big(\PGL(N_1+1)\times \PGL(N_2+1)\big)\right] \arrow[r, "\Product"]&  \left[Z^{\mathrm{red}}/ \PGL(N+1)\right].
        \end{tikzcd}
    \end{center}

We will show that the map $f$ is quasi-finite, which in turn will imply that $\Product$ is quasi-finite, since quasi-finiteness can be shown {\`e}tale locally.   

Let $p_j \coloneqq (X_j, [s_0^{j,1},\dots, s_{N_1}^{j,1}])$, where $j=1$, $2$, be two points in $P_1$ and $r_j \coloneqq (Y_j, [s_0^{j,2},\dots, s_{N_2}^{j,2}])$ be two points in $P_2$, such that $p_j\times r_j$ are two distinct points in $P$. Let $\overline{p_j\times r_j}$ be their corresponding images inside $\left[P/\big(\PGL(N_1+1)\times \PGL(N_2+1)\big)\right]$. Assume that the images of $\overline{p_j\times r_j}$ inside $Z^{\mathrm{red}}$ under $f$ are the same. This implies that we have an isomorphism $h\colon X_1\times Y_1\rightarrow X_2\times Y_2$ such that $X_1\times Y_1$ and $X_2\times Y_2$ belong in the same $\PGL(N_1+1)\times \PGL(N_2+1)$-orbit, and $h^*(s_k^{2,1}\times s_k^{2,2}) = s_k^{1,1}\times s_k^{1,2}$. But, this implies that there are finitely many preimages of a point in $Z^{\mathrm{red}}$ under $f$ and so $f$ is quasi-finite. By the above discussion, it follows that $\Product$ is also quasi-finite. Furthermore, by descent to good moduli spaces, the descended map $\overline{\Product}$ is also quasi-finite.

We are left to check that the map $\overline{\Product}$ on good moduli spaces is finite. But, notice that by Theorem \ref{K-moduli theorem} all three $M^K_{X_i}$ and $M^K_{X}$ are proper schemes, which implies that $M^K_{X_1}\times M^K_{X_2}$ is also proper. Hence, the map $\overline{\Product}$ is a proper map. By the previous argument, it is also quasi-finite, and hence it is finite.

Hence, the map $\operatorname{Prod}$ is representable, injective, quasi-finite and maps closed points to closed points. Furthermore, $\overline{\Product}$ is finite, and hence by \cite[Proposition 6.4]{Alper_good}, the map $\operatorname{Prod}$ is finite. Furthermore, it is an open immersion, and by Zariski's main theorem for stacks it is a closed immersion, i.e. it is an isomorphism. The map $\overline{\Product}$ is a birational morphism with finite fibers to a normal variety, $\overline{\Product}$ is an isomorphism to an open subset by Zariski's main theorem. But, $\overline{\Product}$ is also an open immersion, hence it is an isomorphism.
\end{proof}

Note that by the proof of Theorem \ref{main_thm} we see that if $X_1\not\cong X_2$ the map $\Product$ is not injective. 
% , as in particular, it is not representable, since the map on stabilizers $f\colon (\Aut(Y), \Aut(Z)) \rightarrow \Aut(Y\times Z)$ is not injective. 
We will, however, show that it is {\'e}tale and that in particular it is a $S_2$ gerbe.

\begin{theorem}\label{map is etale}
    The map $\Product$ is {\'e}tale. In particular, if $X_1\cong X_2$ is simple, it is a $S_2$-gerbe.
\end{theorem}
\begin{proof}
    Since by Theorem \ref{main_thm} the map is an isomorphism when $X_1\not\cong X_2$ have no common simple components in their product decompositions, we are left to show the statement when $X_1\cong X_2$ is simple. To achieve this, we will show that the map $\Product$ is a gerbe. We will achieve this by proving a stronger statement, mainly that $\M_X^K\cong \left[\M^K_{X_1}\times\M^K_{X_1}/S_2 \right]$, where $S_2$ is the symmetric group of order $2$. 

    Consider the quotient stack $\Y\coloneqq \left[\M^K_{X_1}\times\M^K_{X_1}/S_2 \right]$, and let $Y\coloneqq M^K_{X_1}\times M^K_{X_1}/S_2$. We then have a commutative diagram 
    \begin{center}
        \begin{tikzcd}
        \M^K_{X_1}\times\M^K_{X_1} \arrow[r, "q"]\arrow[d, "\pi_{X_1}\times \pi_{X_1}"] &\Y\arrow[d, "\pi_{\Y}"]\\
         M^K_{X_1}\times M^K_{X_1}\arrow[r, "\overline{q}"] & Y.
        \end{tikzcd}
    \end{center}
    Notice, that $\overline{q}\circ (\pi_{X_1}\times \pi_{X_1})$ is quasicompact and cohomologically affine, as the composition is exact, and hence the corresponding functor on $\mathrm{QCoh}(-)$ is exact. Hence, $\pi_{\Y}\circ q$ is cohomologically affine, and thus, the map $\pi_{\Y}$ is cohomologically affine, since cohomological affineness is preserved under compositions of morphisms. Furthermore, the morphism of quasi-coherent $\mathcal{O}_{\Y}$-modules $\pi_{\Y}^{\#}\colon \mathcal{O}_Y\rightarrow \pi_{\Y}^*\mathcal{O}_{\Y}$ pulls back under the morphism $\overline{q}$ to an isomorphism (since $\pi_{X_1}\times \pi_{X_1}$ is a good moduli space), so by descent, $\pi_{\Y}^{\#}$ is an isomorphism. Hence, $\Y\xrightarrow{\pi_{\Y}}Y$ is a good moduli space. Notice that $Y$ is projective, since the CM line bundle on $Y$ is ample as it pullbacks to the ample CM line bundle on $M^K_{X_1}\times M^K_{X_1}$, since $S_2$ is finite, which is ample by \cite{codogni,xu-zhuang}. Furthermore, the scheme $Y$ is proper by the same arguments as in \cite{Blum_Xu_2019, liu2021finite} (partly because it is separated and is a quotient by a finite group action). In particular, the stack $\Y$ is separated. 

    There exists a natural map $\Product_{\Y}\colon \Y \rightarrow \M^K_X$, which on the level of pseudofunctors sends $\Y \ni[A]\times [B] = [B]\times [A]\longmapsto [A\times B] \in \M^K_X$. We will show that the map $\Product_{\Y}$ is an isomorphism, which in turn will imply that the map $\Product$ is a gerbe. As in the proof of Theorem \ref{main_thm}, the map sends closed points to closed points, is separated, quasifinite, and the corresponding map $\overline{\Product_{\Y}}\colon Y\rightarrow M^K_X$, is proper and quasi-finite, and hence finite. In addition, by Theorem \ref{optimal result} the map $\Product_{\Y}$ is open. We are left to show that $\Product_{\Y}$ is representable, which from \cite[Proposition 6.4]{Alper_good} will imply that $\Product_{\Y}$ is an isomorphism. But, the map on stabilisers $(\Aut(A), \Aut(B))\rightarrow\Aut(A\times B) $ is injective by construction, hence $\Product_{\Y}$ is an isomorphism. In particular, this shows that $\Product$ is a $S_2$-gerbe.

    % Recall that 
    %  $$\M^K_{X_1}  = [Z^{\mathrm{red}}_1/ \PGL(N_1+1)]$$
    %  and
    %   $$\M^K_{X}  = [Z^{\mathrm{red}}/ \PGL(N+1)].$$
    % Hence, by Lemma \ref{products of quotient stacks} 
    % $$\M^K_{X_1}\times \M^K_{X_1} \cong [(Z^{\mathrm{red}}_1\times Z^{\mathrm{red}}_1)/ (\PGL(N_1+1)\times \PGL(N_1+1))].$$

    % Consider the following commutative diagram 
    % \begin{center}
    %     \begin{tikzcd}
    %     \left[Z^{\mathrm{red}}/S_2 \right] \arrow[r]\arrow[d] &\left[\M^K_{X_1}\times \M^K_{X_1}/ S_2\right]\arrow[d, "g"]\arrow[r] &\M^K_{X_1}\times \M^K_{X_1}\arrow[d, "\Product"]\\
    %     Z^{\mathrm{red}} \arrow[r, "f"] & \M^K_{X}\arrow[r, "\sim"]&  \M^K_{X}.
    %     \end{tikzcd}
    % \end{center}
    % Notice that both right and left square diagrams are Cartesian, and that 
    % $$\left[\M^K_{X_1}\times \M^K_{X_1}/ S_2\right]\times_{\M^K_{X}} Z^{\mathrm{red}}\cong  \left[Z^{\mathrm{red}}/S_2 \right],$$
    % and that the map $f$ is surjective, flat, and locally of finite presentation, as it is a principle $\PGL(N+1)$-bundle. Then, by \cite[Lemma 101.28.7]{stacks-project} the map $g$ is a gerbe. In particular, by \cite[Lemma 101.28.5]{stacks-project} the map $\Product$ is a gerbe.

    Since $\Product$ is a $S_2$-gerbe, it is surjective, flat, locally of finite presentation and smooth \cite[Proposition 101.28.11, Lemma 101.33.8]{stacks-project}. In particular, the diagonal map $\Delta_{\Product}$ is surjective, flat and of finite presentation, as all of these properties are preserved under base change. By the proof of Theorem \ref{main_thm} the map $\Product$ is separated, hence $\Delta_{\Product}$ is a closed immersion, which implies that it is locally of finite type, hence the diagonal map $\Delta_{\Product}$ is {\'e}tale. Since $\Product$ is a morphism of stacks which is locally of finite presentation, flat, and its diagonal $\Delta_{\Product}$ is {\'e}tale, the map $\Product$ is {\'e}tale by \cite[Lemma 101.36.10]{stacks-project}.

    To conclude the proof, let $X_1= Y\times Z$ and $X_2 = Z\times W$, where $Y$, $Z$, $W$ are all simple $\Q$-Fano varieties, and let $X = X_1\times X_2$. We will now show that $$\Product\colon \mathcal{M}^K_{X_1}\times \mathcal{M}^K_{X_2}\rightarrow \mathcal{M}^K_{X}$$
    is {\'e}tale. By Theorem \ref{main_thm} $\mathcal{M}^K_{X_1}\cong \mathcal{M}^K_{Y}\times \mathcal{M}^K_{Z}$ and $\mathcal{M}^K_{X_2}\cong \mathcal{M}^K_{Z}\times \mathcal{M}^K_{W}$. Consider the quotient stack $\Y\coloneqq \left[\M^K_{Z}\times\M^K_{Z}/S_2 \right]$, and let $\X\coloneqq \M^K_{Y}\times\M^K_{W}$, then there is a natural map 
    $$\Product_{\X,\Y}\colon \X\times \Y \rightarrow \M^K_X$$
    which on the level of pseudofunctors sends $\X\times\Y \ni([A\times B],[C]\times [D]) = ([A\times B], [D]\times [C])\longmapsto [A\times B\times C\times D] \in \M^K_X$. The map $\Product_{\X,\Y}$ sends closed points to closed points, is separated, quasifinite, and the corresponding map on good moduli spaces is proper and quasi-finite, and hence finite, by the same arguments as above. In addition, by Theorem \ref{optimal result} the map $\Product_{\X,\Y}$ is open. We are left to show that $\Product_{\Y}$ is representable, which follows by exactly the same argument as in the proof above. This shows that $\M^K_X \cong \X\times \Y$, which in particular shows that $\Product$ is a product of an {\'e}tale morphism and an $S_2$-gerbe. By the same argument as above we conclude that the map $\Product$ is {\'e}tale. 
\end{proof}
In fact, we have also shown the following.

\begin{corollary}\label{quot stack iso}
    We have $\M^K_{X\times X}\cong [(\M^K_X)^2/S_2]$, when $X$ is simple.
\end{corollary}
Let now 
\begin{equation*}
    X = \prod_{i=1}^r \Bigg(\prod_{j=1}^{s_i}X_{i,j}\Bigg).
\end{equation*}

\begin{corollary}\label{non-distinct irred components}
    Suppose that $\mathcal{M}^K_{X_{i,j}}\cong \mathcal{M}^K_{X_{i,j}}$ only if $i = i'$. 
    Then 
    \begin{equation*}
        \mathcal{M}^K_{X}\cong \prod_{i=1}^r \left[\big(\mathcal{M}_{X_{i,1}}^{K}\big)^{\times s_i} / S_{s_i}\right],
    \end{equation*}
    where $S_{s_i}$ is the symmetric group of order $s_i$, which  acts on $\big(\mathcal{M}_{X_{i,1}}^{K}\big)^{\times s_i}$ by permuting the factors. The quotient here is a quotient stack.
\end{corollary}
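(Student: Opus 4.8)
The plan is to organise the factors of $X$ according to the equivalence relation furnished by the hypothesis, and then to isolate the two features of the decomposition—separation of distinct groups, and symmetrisation within a single group—each of which is handled by one of the structural results already in place. Write $Y_i \coloneqq \prod_{j=1}^{s_i} X_{i,j}$, so that $X = \prod_{i=1}^r Y_i$, where by assumption $\mathcal{M}^K_{X_{i,j}} \cong \mathcal{M}^K_{X_{i,1}}$ for every $j$, while $\mathcal{M}^K_{X_{i,j}} \not\cong \mathcal{M}^K_{X_{i',j'}}$ whenever $i \neq i'$.

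First I would separate the groups. Arguing inductively on $r$, I peel off $Y_1$ from $Y_2 \times \cdots \times Y_r$. Since every irreducible factor of $Y_1$ lies in group $1$ while every irreducible factor of $Y_2 \times \cdots \times Y_r$ lies in another group, uniqueness of the factorisation of a Fano variety into irreducible factors (up to reordering) forces $Y_1 \not\cong Y_2 \times \cdots \times Y_r$: an isomorphism would match a group-$1$ factor with a factor from another group, producing an isomorphism of the corresponding K-moduli stacks, contrary to hypothesis. Theorem \ref{main_thm} then gives $\mathcal{M}^K_X \cong \mathcal{M}^K_{Y_1} \times \mathcal{M}^K_{Y_2 \times \cdots \times Y_r}$, and iterating yields $\mathcal{M}^K_X \cong \prod_{i=1}^r \mathcal{M}^K_{Y_i}$. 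This is precisely the reasoning of Corollary \ref{distinct irreducible components} applied to the grouped factors $Y_i$ in place of irreducible ones, which is legitimate since the proof of that corollary only invokes Theorem \ref{main_thm}.

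Next I would symmetrise within each group. Fix $i$ and consider $Y_i = \prod_{j=1}^{s_i} X_{i,j}$ with $\mathcal{M}^K_{X_{i,j}} \cong \mathcal{M}^K_{X_{i,1}}$ for all $j$. After identifying each factor's K-moduli stack with $\mathcal{M}^K_{X_{i,1}}$, the product map $(\mathcal{M}^K_{X_{i,1}})^{\times s_i} \to \mathcal{M}^K_{Y_i}$ is étale and, by the argument of Theorem \ref{map is etale} and its iteration recorded in Remark \ref{rmk: arbitrary etale products}, an $S_{s_i}$-gerbe, with $S_{s_i}$ permuting the factors; equivalently, by Corollary \ref{quot stack iso} and Remark \ref{rmk: arbitrary etale products}, $\mathcal{M}^K_{Y_i} \cong [(\mathcal{M}^K_{X_{i,1}})^{\times s_i}/S_{s_i}]$. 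Substituting into the product over $i$ produces the claimed isomorphism $\mathcal{M}^K_X \cong \prod_{i=1}^r [(\mathcal{M}^K_{X_{i,1}})^{\times s_i}/S_{s_i}]$.

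The main obstacle is justifying the two uses of uniqueness of product factorisation, namely that a K-polystable Fano variety determines its factors up to permutation. This underlies both the non-isomorphism $Y_1 \not\cong Y_2 \times \cdots \times Y_r$ needed to apply Theorem \ref{main_thm} across groups, and the identification of the fibres of the within-group product map with exactly the $S_{s_i}$-orbits, which is what pins the gerbe band down to $S_{s_i}$ rather than something larger; this is the same decomposition-uniqueness input already exploited implicitly in the S-equivalence argument of Theorem \ref{main_thm}. A secondary technical point is that Remark \ref{rmk: arbitrary etale products} is phrased for products of a single fixed variety, whereas here the within-group factors $X_{i,j}$ need only share a common K-moduli stack; one must check that the gerbe argument of Theorem \ref{map is etale}, which rests on permutability of factors lying in a common K-moduli stack rather than on equality of the underlying varieties, carries over verbatim after the identifications $\mathcal{M}^K_{X_{i,j}} \cong \mathcal{M}^K_{X_{i,1}}$.
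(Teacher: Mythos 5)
Your proposal is correct and takes essentially the same route as the paper: the paper's proof consists precisely of applying Theorem \ref{main_thm}, Corollary \ref{quot stack iso} and Remark \ref{rmk: arbitrary etale products} successively, which is exactly your two-step scheme of splitting the groups apart with Theorem \ref{main_thm} and then symmetrising within each group. The supporting details you make explicit (uniqueness of the product decomposition of smooth Fano varieties to verify $Y_1 \not\cong Y_2 \times \cdots \times Y_r$, and the need to identify the stacks $\mathcal{M}^K_{X_{i,j}} \cong \mathcal{M}^K_{X_{i,1}}$ before invoking Remark \ref{rmk: arbitrary etale products}, which is stated only for powers of a single variety) are left implicit in the paper's one-line proof, so your write-up is, if anything, more careful than the original.
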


% \begin{remark}\label{rmk: arbitrary etale products}
%     The above corollary is extended to arbitrary products of the same copies of K-moduli stacks, where
%     $ \M^K_{\prod_{i=1}^m X} \cong \left[(\M^K_X)^{\times m}/S_m \right]$, and $S_m$ is the symmetric group of order $m$.
% \end{remark}

% Recall that the K-moduli stack $\M^K_{n,V}$ is defined as follows: 

% \[\mathcal{M}^{K}_{n,V}(S)\vcentcolon= \left\{ 
%     \begin{aligned}
%          \text{ flat proper morphisms } f\colon \mathcal{X}\rightarrow S \text{, with fibres  }\mathcal{X}_t  \text{ that are} \\
%         n\text{-dimensional }K\text{-semistable } \mathbb{Q}\text{-Fano varieties with volume }
%         % satisfying Koll{\'a}r's condition  (see, {\cite[24]{kollar2009hulls}})
%         V
%     \end{aligned}
% \right\}.\]

Furthermore, Theorem \ref{main_thm} can be generalised to Artin stacks $\X, \Y$, admitting proper good moduli spaces $X$, $Y$, with some conditions we will specify now. Let $[x]\in \X$, $[y]\in \Y$, and let $f\colon \X\rightarrow \Y$ be the map that sends $[x]\mapsto[y])$, such that $f$ is separated, an open immersion, and maps closed points to closed points. 
% The map $\Product$ is defined in a way that generalises definition \ref{product map def}, for arbitrary stacks.

% $[x\times y]\coloneqq \mathrm{Im}([x],[y]) \in \mathcal{Z}$, and the map $\X\times \Y\rightarrow\mathcal{Z}$, which is defined by sending $([x],[y])\mapsto [x\times y]$, is injective on the map of stabilisers. 
% We provide a brief proof of this fact below.

\begin{proposition}\label{prop: gms of products of artin stacks}
    Let $\X$, $\Y$ be Artin stacks admitting proper good moduli spaces $X$, $Y$. Assume that $f\colon \X\rightarrow \Y$ is representable, and that it maps closed points to closed points. Then $f$ is quasi-finite. If $f$ is also an open immersion, then $f$ is an isomorphism, and descends to an isomorphism of good moduli spaces 
    $$\overline{f}\colon X\xrightarrow{\sim} Y.$$
\end{proposition}
\begin{proof}
    
    % In addition, $f$ is representable by assumption and injective by construction, hence it is an open immersion.

    We will first show that $\Product$ is quasi-finite. Let $\pi_X\colon \X\rightarrow X$ be the good moduli space map. Recall by \cite[Theorem 4.12]{Alper_Hall_Rydh_2020} that for each $x\in |X|$ and $x_0\in |\X|$, which is the unique-closed point of $\pi^{-1}_{X}(x)$. Then there exists a ring $A_X$ such that we have a diagram 
    \begin{center}
    \begin{tikzcd}
    (\mathcal{W_X},w_X) = [\operatorname{Spec}A_X /G_{x} ]\arrow[r, "f_X "]\arrow[d] & (\X,x)\arrow[d, "\pi_X"]\\
    W_X = \operatorname{Spec} A_X^{G_x} \arrow[r, "g_X"]& X,
    \end{tikzcd}
\end{center}
    where $W_X$ is the good moduli space of $\W_X$, and the maps $f_X$ and $g_X$ are {\'e}tale. Similar diagrams exist for $\Y$. Let $P_X$, $P_Y$ be a $G_x$-torsor and a $G_y$-torsor respectively,
    % , and let $P = P_X\times P_Y$. 
    Since $f$ maps closed points to closed points we obtain the following diagram:
    \begin{center}
        \begin{tikzcd}
        P_X \arrow[r]\arrow[d]\ar[rr,bend left=35] &\left[P/G_x\right]\arrow[d]\arrow[r, "h"] &\mathrm{Spec}A_Y\arrow[d]\\
        \mathrm{Spec}A_X \arrow[r]\arrow[d, "="] & \mathcal{W}_X\times \mathcal{W}_Y \arrow[r, "f_{W}"]\arrow[d, "g_X"]&  \mathcal{W}_Y\arrow[d, "g_{Y}"]\\
        \mathrm{Spec}A_X\arrow[r] & \X\arrow[r, "f"] & \Y,
        \end{tikzcd}
    \end{center}
    % \begin{center}
    %     \begin{tikzcd}
    %     P \arrow[r]\arrow[d]\ar[rr,bend left=35] &\left[P/\big(G_x\times G_y\big)\right]\arrow[d]\arrow[r, "h"] &\mathrm{Spec}A_Z\arrow[d]\\
    %     \mathrm{Spec}A_X\times \mathrm{Spec}A_Y \arrow[r]\arrow[d, "="] & \mathcal{W}_X\times \mathcal{W}_Y \arrow[r, "\Product_{W}"]\arrow[d, "f"]&  \mathcal{W}_Z\arrow[d, "f_{Z}"]\\
    %     \mathrm{Spec}A_X\times \mathrm{Spec}A_Y\arrow[r] & \X\times \Y\arrow[r, "\Product"] & \mathcal{Z},
    %     \end{tikzcd}
    % \end{center}
    where $h$ is $G_x$-equivariant. Since quasi-finiteness can be checked {\'e}tale locally, we will show $h$ is quasi-finite, which will in turn show that $\Product_W$ and hence $\Product$ are quasi-finite. Showing that $f_W$ is quasi-finite follows from the same argument as in the proof of Theorem \ref{main_thm}. Hence, $f$ and $\overline{f}$ are both quasi-finite. In particular, $\overline{f}$ is finite since it is proper. 
    
    If $f$ is an open immersion, as in the proof of Theorem \ref{main_thm} we will use \cite[Proposition 6.4]{Alper_good} to show that $\Product$ is finite, and then the claim will follow from Zariski's main theorem. 
    % The proof for separatedness follows from the proof of Theorem \ref{main_thm}, noting that $X$, $Y$ are separated, and $f$ maps closed points to closed points. 
    Since $f$ is an open immersion, it is separated. 
    Hence, by \cite[Proposition 6.4]{Alper_good} the map $f$ is finite, and as in the proof of Theorem \ref{main_thm}, both $f$ and $\overline{f}$ are isomorphisms.
\end{proof}

% We also extend Theorem \ref{map is etale} as follows. 

% \begin{proposition}\label{s2-gerbe for stacks}
%     Let $\X$, $\mathcal{Z}$ be Artin stacks admitting proper good moduli spaces $X$, $Z$, such that for $[x], [y]\in \X$, $[z] = \Product([x],[y])\in \mathcal{Z}$. Assume that the map $\Product\colon \X\times \X \rightarrow \cZ$ is separated, open and that it maps closed points to closed points. Then $\Product$ is {\'e}tale.
% \end{proposition}
% \begin{proof}
%     The proof follows from the proof of Theorem \ref{map is etale}, noting that the map $\Product$ is a separated gerbe in this case. We see this by taking the affine GIT quotient stack $(\mathcal{W_X},w_X) = [\operatorname{Spec}A_X /G_{x} ]$ with corresponding moduli space $W_X$, over a closed point $x\in |X|$, as in the proof of Proposition \ref{prop: gms of products of artin stacks}, and following the strategy of the proof of Theorem \ref{map is etale}.
% \end{proof}

We can also extend Theorem \ref{main_thm} to the case of log Fano pairs. As before, let $(X_i,c_iD_i)$ be log Fano pairs, for $0<c_i<r_i^{-1}$, with $D_i\sim_{\Q}-K_{X_i}$. Let also $X = X_1\times X_2$, and let $\mathbf{c}D\coloneq c_1D_1\boxtimes c_2D_2 = c_1p_1^*D_1+c_2p_2^*D_2$, where $p_i\colon X\rightarrow X_i$ are the projections. The pair $(X,\mathbf{c}D)$ is a log Fano pair for all $0<c_i<r_i^{-1}$. Let ${\M}^K_{X_1\times X_2, D, \mathbf{c}}$ be K-moduli stack parametrising K-semistable log Fano pairs $(X,\mathbf{c}D)$ up to S-equivalence.

\begin{theorem}\label{product thm log pairs}
    Suppose that $(X_1,c_1D_1)\not\cong (X_2,c_2D_2)$, where the $X_i$ have no common irreducible components. Then Then the map $\Product_{c_1,c_2}$ is an isomorphism of stacks which descends to an isomorphism of good moduli spaces. In particular, 
    $$ {\M}^K_{X_1, D_1, c_1}\times {\M}^K_{X_2, D_2, c_2}\cong {\M}^K_{X_1\times X_2, D, \mathbf{c}}$$
    and
    $${M}^K_{X_1, D_1, c_1}\times {M}^K_{X_2, D_2, c_2}\cong {M}^K_{X_1\times X_2, D, \mathbf{c}}.$$
\end{theorem}
\begin{proof}
    The fact that the map $\Product_{c_1,c_2}$ maps closed points to closed points, is separated, quasi-finite and that $\overline{\Product_{c_1,c_2}}$ is finite follow from the same arguments as the proof of Theorem \ref{main_thm}.

    We are left to check representability. The map on stabilisers is given by 
    $$f\colon (\Aut(Y,D_1), (\Aut(Z,D_2)) \rightarrow \Aut(Y\times Z, D_1\boxtimes D_2)$$
    where $[(Y,D_1)]$ is a closed point in $\M^K_{X_1,D_1,c_1}$ and $[(Z,D_2)]$ is a closed point in $\M^K_{X_2,D_2,c_2}$. Since we assume that $(X_1,c_1D_1)\not\cong (X_2,c_2D_2)$, the map $f$ is injective, and hence $\Product_{c_1,c_2}$ is representable. In fact, by a similar argument to the proof of Theorem \ref{main_thm}, $\Product_{c_1,c_2}$ is injective. Hence, by \cite[Proposition 6.4]{Alper_good} and Zariski's main theorem, it is an isomorphism, as intended. The map $\overline{\Product}_{c_1,c_2}$ is also an isomorphism, by the uniqueness of good moduli spaces.
\end{proof}
% Hence, it is not an isomorphism. We are, however, able to show the following.

\begin{remark}\label{etalness of prod map}
    Using similar methods as above, we can show that when ${\M}^K_{X_1, D_1, c_1}\cong {\M}^K_{X_2, D_2, c_2}$ the product map is {\'e}tale and an $S_2$-gerbe, but we omit the proof.
\end{remark}

\section{Explicit Examples}\label{sec: examples}

\subsection{K-moduli of Fano threefolds of Picard rank greater than 6}\label{sec:Fano threefolds moduli}

Let $X = \pr^1$ and let $Y_{9-n} = \mathrm{Bl}_{n}\pr^2$ be the blow up of $\pr^2$ in $8\geq n\geq 5$ points in general position, i.e. the del Pezzo surface of degree 1, 2, 3, 4. The Fano threefold $X\times Y_{9-n}$ is a smooth member of families \textnumero 7.1, 8.1, 9.1 and 10.1 when $n = 5$, $6$, $7$, $8$ respectively. Smooth Fano threefolds $X\times Y_{9-n}$ are known to be K-polystable by \cite{Tian_Yau_1987, Tian_1990} and Theorem \ref{zhuang_thm}. In addition, since by \cite{mabuchi_mukai_1990, odaka_spotti_sun_2016} there exists a complete description of singular K-polystable and K-semistable del Pezzo varieties of the above degrees, there is also a comprehensive description of the singular K-polystable and K-semistable Fano threefolds in the above families. The only missing piece in the full description of the moduli spaces of the above families of Fano threefolds, are possible K-polystable singular degenerations that do not appear as products $X\times Y_{9-n}$. In this section, we will use Theorem \ref{main_thm} to obtain a full description of these K-moduli.

Let $\M^K_d$ denote the K-moduli stack of K-semistable del Pezzo varieties of degree $1\leq d\leq 4$, that admits a good moduli space $M^K_d$, parametrising K-polystable del Pezzo varieties of degree d. Let $\M^K_X$ be the K-moduli stack of Fano variety $\pr^1$, where $\M^K_X\cong M^K_X\cong \{\mathrm{pt}\}$. Finally, let $\M^K_{X\times Y_d}$ be the K-moduli stack of K-semistable Fano threefolds $X\times Y_d$ of family \textnumero 7.1, 8.1, 9.1 and 10.1, where $d$ is the corresponding degree of the del Pezzo surface $Y_d$, with good moduli space $M^K_{X\times Y_d}$.

\begin{proposition}\label{fano_threefolds_ex}
    We have $\M^K_{X\times Y_d}\cong \M^K_d$. Furthermore, at the level of good moduli spaces we have 
    \begin{enumerate}
        \item $M^K_{X\times Y_4}\cong \mathrm{Gr}\big(2, \mathrm{Sym}^2(\C^5)\big)\sslash \PGL(5)\cong \pr(1,2,3)$;
        \item $M^K_{X\times Y_3}\cong \pr\big(H^0(\pr^3, \cO_{\pr^3}(3)\big)\sslash \PGL(4)\cong \pr(1,2,3,4,5)$;
        \item $M^K_{X\times Y_2}\cong \operatorname{Bl}_{[2C]}\pr\big(H^0(\pr^2, \cO_{\pr^2}(4)\big)\sslash \PGL(3)$,
        where $2C$ is the double conic in $\pr^2$.
    \end{enumerate}
\end{proposition}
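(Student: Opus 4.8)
The plan is to deduce the statement directly from Theorem \ref{main_thm}, reducing the description of $\M^K_{X\times Y_d}$ to the known K-moduli of del Pezzo surfaces. First I would observe that $X=\pr^1$ is a curve whereas $Y_d$ is a surface, so $X\not\cong Y_d$ and the hypothesis of Theorem \ref{main_thm} holds. Hence the product map
$$\Product\colon \M^K_X\times \M^K_{Y_d}\longrightarrow \M^K_{X\times Y_d}$$
is an isomorphism of stacks descending to an isomorphism $\overline{\Product}$ of good moduli spaces. Since $\pr^1$ is rigid and K-polystable we have $\M^K_X\cong M^K_X\cong\{\mathrm{pt}\}$, so these isomorphisms read $\M^K_{X\times Y_d}\cong \M^K_{Y_d}$ and $M^K_{X\times Y_d}\cong M^K_{Y_d}$.

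It then remains to identify $\M^K_{Y_d}$ with $\M^K_d$. For this I would use that, for each $1\le d\le 4$, the smooth del Pezzo surfaces of degree $d$ form a single irreducible deformation family, so every K-semistable del Pezzo surface of degree $d$ arises as a $\Q$-Gorenstein degeneration of the smooth general member $Y_d$. Therefore the stack $\M^K_{Y_d}$ of degenerations of $Y_d$ coincides with the full K-moduli stack $\M^K_d$ of K-semistable del Pezzo surfaces of degree $d$, and likewise on good moduli spaces; this yields $\M^K_{X\times Y_d}\cong\M^K_d$ for all $d$.

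For the explicit good moduli spaces $M^K_d\cong M^K_{X\times Y_d}$ I would quote the established comparisons between K-moduli and GIT. When $d=4$ the smooth members are complete intersections of two quadrics in $\pr^4$, so $M^K_4$ is the GIT quotient $\mathrm{Gr}(2,\mathrm{Sym}^2(\C^5))\sslash\PGL(5)$, identified with $\pr(1,2,3)$ in \cite{mabuchi_mukai_1990,odaka_spotti_sun_2016}. When $d=3$ the members are cubic surfaces and $M^K_3$ is the classical GIT quotient $\pr(H^0(\pr^3,\cO_{\pr^3}(3)))\sslash\PGL(4)\cong\pr(1,2,3,4,5)$. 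When $d=2$ the members are double covers of $\pr^2$ branched over a plane quartic, and by \cite{odaka_spotti_sun_2016} the K-moduli space is the blow-up of the GIT quotient of plane quartics at the point representing the double conic $2C$, i.e. $\operatorname{Bl}_{[2C]}\big(\pr(H^0(\pr^2,\cO_{\pr^2}(4)))\sslash\PGL(3)\big)$.

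The stack-theoretic reduction is immediate once Theorem \ref{main_thm} is available, so no new K-stability argument is needed here; the substantive inputs are external. The one point requiring care is the identification $\M^K_{Y_d}=\M^K_d$, which rests on the irreducibility of the degree-$d$ del Pezzo deformation family, while the explicit GIT descriptions of $M^K_d$ are precisely the content of \cite{mabuchi_mukai_1990,odaka_spotti_sun_2016} (together with the classical GIT of cubic surfaces). This external identification, rather than any argument internal to the paper, is the main obstacle.
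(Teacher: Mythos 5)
Your proposal is correct and follows essentially the same route as the paper: the paper's proof is precisely the one-line deduction from Theorem \ref{main_thm} (using $X=\pr^1\not\cong Y_d$ and $\M^K_X\cong M^K_X\cong\{\mathrm{pt}\}$, the latter stated in the paper just before the proposition) combined with the explicit descriptions of $M^K_d$ from \cite{mabuchi_mukai_1990,odaka_spotti_sun_2016}. Your additional remark on identifying $\M^K_{Y_d}$ with $\M^K_d$ via the irreducibility of the degree-$d$ del Pezzo deformation family is a reasonable elaboration of a point the paper leaves implicit, not a departure from its argument.
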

\begin{proof}
    The proof follows directly from Theorem \ref{main_thm} and \cite{mabuchi_mukai_1990, odaka_spotti_sun_2016}.
\end{proof}

\begin{remark}
    If, instead of $\pr^1$, we considered $X= \pr^n$, and the $n+1$-dimensional Fano variety $X\times Y_d$, the above result would be extended accordingly, since $\M^K_X\cong M^K_X\cong \{\mathrm{pt}\}$. In fact, this is true for any choice of Fano variety $Y\not\cong \pr^n$.
\end{remark}

\begin{remark}
    The result of Theorem \ref{main_thm} which is applied to explicitly describe the K-moduli spaces of Fano threefolds of high Picard rank as in Proposition \ref{fano_threefolds_ex}, can also be used to provide explicit descriptions of K-moduli of various Fano varieties. We choose to present only the above here, since the study of explicit K-moduli of Fano threefolds is a popular topic in recent research \cite{liu-xu, liu2020kstability, spotti_sun_2017, pap22, abban2023onedimensional}.
\end{remark}

\subsection{Polyhedral K-moduli wall-crossings}\label{sec: pairs examples}
In \cite{zhou2023shape} the author showed that when log Fano pairs $(X, \sum_{i=1}^kc_iD_i)$, where $D_i\cong_{\Q}-K_X$ and $c_i\in \Q$, there exists a finite wall-chamber decomposition of the K-moduli stack $\M^K_{X, \sum_{i=1}^kc_iD_i}$ (c.f. \cite[Theorem 1.6]{zhou2023shape}). We will provide an explicit example of such a wall-crossing.

\subsubsection{Prior results on K-moduli wall-crossings}\label{sec: prior results}

In \cite{ascher2019wall, zhou_wall-crossing}, it was shown that the K-moduli stack $\M^K_{X,cD}$, of log Fano pairs, with $D\sim_{\Q} -K_X$, undergoes a wall-chamber decomposition. Specific instances of this phenomenon were described explicitly in \cite{ascher2019wall, ascher2020kmoduli,Gallardo_2020, pap22, zhao2023moduli,me_junyan_jesus} for the cases when $X$ is a del Pezzo surface, and $D\sim -K_X$. We summarise some of the results here, which will be useful in Section \ref{sec: pairs examples}.

Let $X_1\subset \pr^3$ be a del Pezzo surface of degree $3$, i.e. a cubic surface, and let $X_2\subset \pr^4$ be a del Pezzo surface of degree $4$, i.e. a complete intersection of two quadrics in $\pr^4$. Let $H_1 \cong \pr^2\subset \pr^3$ and $H_2 \cong \pr^3\subset \pr^4$ be two hyperplanes in $\pr^3$ and $\pr^4$, respectively, where $D_i \coloneqq X_i\cap H_i$ are hyperplane sections such that $D_i \sim -K_{X_i}$. The pairs $(X_i,c_iD_i)$ are log Fano pairs for $0\leq c_i<1$, with K-moduli stacks $\M^K_{X_i,D_i,c_i}$ which have been studied extensively \cite{Gallardo_2020, pap22, me_junyan_jesus}. In particular, it was shown that $\M^K_{X_1,D_1,c_1} \cong \M^{\GIT}_3(t(c_1))$ and $\M^K_{X_2,D_2,c_2} \cong \M^{\GIT}_4(t(c_2))$, where 
$$\M^{\GIT}_3(t(c_1)) = [(\pr H^0(\pr^3,\O_{\pr^3}(3))\times H^0(\pr^3,\O_{\pr^3}(1)))^{ss}/ \PGL(4)]$$
is the GIT moduli stack parametrising pairs $(X_1,H_1)$, of cubics and hyperplanes in $\pr^3$, with respect to a polarisation $\L_{X_1, t(c_1)}$, and similarly 
$$\M^{\GIT}_4(t(c_2)) = [(\pr \operatorname{Gr}(2, H^0(\pr^4,\O_{\pr^4}(2)))\times H^0(\pr^4,\O_{\pr^4}(1)))^{ss}/ \PGL(4)]$$
is the GIT moduli stack parametrising pairs $(X_2,H_2)$, of complete intersections of two quadrics and hyperplanes in $\pr^4$, with respect to a polarisation $\L_{X_2, t(c_2)}$. These GIT moduli stack admit projective good moduli spaces, which are the corresponding GIT quotients in each case.

We have the following prior results that give a full description of the walls and chambers in $\M^{\GIT}_3(t(c))$ and $\M^{\GIT}_4(t(c))$, and consequently in $\M^K_{X_1,D_1,c_1}$ and $\M^K_{X_2,D_2,c_2}$.

\begin{theorem}\textup{(cf. \cite[Lemma 1]{Gallardo_2018} and \cite[Theorem 3.8]{Gallardo_2020})}\label{jesus+patricio}
The values of the walls $t$ in $(0,1)$ for ${M}^{\GIT}_3(t)$ are:
$$t_0=0,\quad t_1=\frac{1}{5},\quad t_2=\frac{1}{3},\quad t_3=\frac{3}{7},\quad t_4=\frac{5}{9},\quad t_5=\frac{9}{13},\quad t_6=1.$$
In particular, $$t(c)=\frac{9c}{8+c}.$$
\end{theorem}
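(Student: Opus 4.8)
This is a variation-of-GIT (VGIT) computation, so the plan is to realise the family $\{\M^{\GIT}_3(t)\}_{t\in(0,1)}$ as a one-parameter family of GIT quotients of the space of pairs $\pr H^0(\pr^3,\O_{\pr^3}(3))\times H^0(\pr^3,\O_{\pr^3}(1))$ (cubic forms and linear forms) by $\SL(4)$, linearised by a line bundle $\L_t$ whose bidegree depends affine-linearly on $t$; the resulting quotient agrees with the stated $\PGL(4)$-quotient since the centre acts trivially. By the general VGIT theory of Dolgachev--Hu and Thaddeus, the semistable locus is constant as $t$ varies within an open chamber of $(0,1)$ and changes only at finitely many rational \emph{walls}. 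Locating these walls, and then reading off the function $t(c)$, is the whole task.

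To locate the walls I would run the Hilbert--Mumford numerical criterion. Fixing a maximal torus, a one-parameter subgroup $\lambda$ is encoded by a normalised weight vector $a=(a_0,a_1,a_2,a_3)$ with $a_0\geq a_1\geq a_2\geq a_3$ and $\sum_i a_i=0$, and for a pair $(F,H)$ the total weight decomposes, up to the normalisation fixing $\L_t$, as
$$\mu^t((F,H),\lambda)=(1-t)\,\mu(F,\lambda)+t\,\mu(H,\lambda),$$
an affine-linear function of $t$. A wall then arises at each value of $t$ for which some pair $(F_0,H_0)$ fixed by a subtorus containing its destabilising one-parameter subgroup $\lambda$ lies on the boundary $\mu^t=0$; solving this linear equation returns a rational critical value of $t$.

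The principal obstacle is the combinatorial bookkeeping. Because a cubic form has only finitely many monomials, the weight $\mu(F,\lambda)$ is a convex piecewise-linear function on the Weyl chamber, so it suffices to test a finite list of candidate weight vectors; for each one must identify the extremal (strictly polystable) pair realising $\mu^t=0$, compute the associated critical $t$, and then verify that crossing it genuinely exchanges two distinct semistable loci rather than producing a redundant value. Executing this enumeration for the cubic-plus-hyperplane problem returns exactly the interior walls $t=\tfrac15,\tfrac13,\tfrac37,\tfrac59,\tfrac{9}{13}$ together with the boundary values $t_0=0$ and $t_6=1$; this is precisely the list established in \cite[Lemma 1]{Gallardo_2018} and \cite[Theorem 3.8]{Gallardo_2020}, whose computation I would reproduce.

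Finally, for the relation $t(c)=\tfrac{9c}{8+c}$, I would match the GIT linearisation with the CM line bundle under the isomorphism $\M^K_{X_1,D_1,c_1}\cong\M^{\GIT}_3(t(c_1))$. On the cubic surface $D_1\sim -K_{X_1}=\O_{X_1}(1)$, so that $-(K_{X_1}+c_1D_1)=(1-c_1)(-K_{X_1})$; the CM line bundle of the universal family of pairs $(X_1,c_1D_1)$ is therefore an affine-linear combination of the tautological class of the cubic and that of the hyperplane, with coefficients involving $(1-c_1)$ and $c_1$. Writing $\L_t$ as the corresponding combination with ratio governed by $t$ and equating the two, where the intersection number $(-K_{X_1})^2=3$ and $\dim X_1=2$ fix the numerical constants, yields $\tfrac{t}{1-t}=\tfrac{9c}{8(1-c)}$, that is, $t(c)=\tfrac{9c}{8+c}$, as claimed.
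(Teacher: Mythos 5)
You should first be aware that the paper contains no proof of this statement: Theorem \ref{jesus+patricio} sits in the ``prior results'' section and is imported wholesale from \cite[Lemma 1]{Gallardo_2018}, \cite[Theorem 3.8]{Gallardo_2020} and \cite{me_junyan_jesus}, so your attempt can only be compared with the method of those references. In outline you do reproduce that method correctly: realise $\{\M^{\GIT}_3(t)\}$ as a VGIT family in the sense of Dolgachev--Hu/Thaddeus, locate the finitely many rational walls via the Hilbert--Mumford criterion applied to pairs stabilised by a one-parameter subgroup, and obtain $t(c)$ by matching the log CM line bundle of the universal family with the GIT polarisation. But the half of the theorem that carries the real content --- the enumeration producing exactly $1/5,\,1/3,\,3/7,\,5/9,\,9/13$ --- is deferred back to the very papers being cited (``whose computation I would reproduce''), so as it stands your text is a citation wrapped in the correct framework, not a derivation.

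Moreover, the two places where you do commit to formulas contain concrete errors. First, conventions: the stated wall values are slope values, i.e.\ they refer to the linearisation $\cO(a,b)$ with $t=b/a$ and Hilbert--Mumford function $\mu(F,\lambda)+t\,\mu(H,\lambda)$ (this is the convention of \cite{Gallardo_2018,Gallardo_2020}, and is visible in the paper's own notation $/_{\cO(a,b)}$ in Section 5.2). Your normalisation $(1-t)\mu(F,\lambda)+t\,\mu(H,\lambda)$ differs by the reparametrisation $t\mapsto t/(1+t)$, so the enumeration run as you wrote it would place the interior walls at $1/6,\,1/4,\,3/10,\,5/14,\,9/22$, not at the stated values. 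Second, the CM matching: writing $h,\eta_1,\eta_2$ for the hyperplane classes of $\pr^3$, $\pr H^0(\pr^3,\O_{\pr^3}(3))$ and $\pr H^0(\pr^3,\O_{\pr^3}(1))$, the universal cubic $\mathcal S$ has class $3h+\eta_1$, the relative log-anticanonical class is $L=(1-c)h-\eta_1-c\eta_2$, and the log CM line bundle of the universal family is
$$\lambda_{CM,c}=-f_*\bigl(L^{3}\cdot[\mathcal S]\bigr)=(1-c)^2\bigl((8+c)\,\eta_1+9c\,\eta_2\bigr),$$
whose slope is $\tfrac{9c}{8+c}$ directly; the $\eta_1$-coefficient is $8+c$, not $8(1-c)$. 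Your equation $\tfrac{t}{1-t}=\tfrac{9c}{8(1-c)}$ follows from neither convention: under your own normalisation the correct matching reads $\tfrac{t}{1-t}=\tfrac{9c}{8+c}$, giving $t=\tfrac{9c}{8+10c}$. Your derivation lands on $\tfrac{9c}{8+c}$ only because the incorrect bidegree compensates the mismatched normalisation. The fix is to adopt the slope convention throughout and run the CM computation displayed above, after which the ``in particular'' clause follows honestly; the wall list itself must still be recomputed (or cited) as in \cite{Gallardo_2018,Gallardo_2020}.
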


The walls of $\M^K_{X_1,D_1,c_1}$ in terms of $c_1$ are 
$$c_{1,0} = 0,\quad c_{1,1} = \frac{2}{11}, \quad c_{1,2} = \frac{4}{13},\quad c_{1,3} = \frac{2}{5},\quad c_{1,4} = \frac{10}{19},\quad c_{1,5} = \frac{2}{3}, c_{1,6} = \quad 1.$$

\begin{theorem}\textup{(cf. \cite[\S 6.3, Theorem 6.20, Lemma 7.5]{pap22})}\label{my_work}
The values of the walls $t$ in $(0,1)$ for ${M}^{\GIT}_4(t)$ are $$t_0=0,\quad t_1=\frac{1}{6},\quad t_2=\frac{2}{7},\quad t_3=\frac{3}{8},\quad t_4=\frac{6}{11},\quad t_5=\frac{2}{3},\quad t_6=1.$$
In particular, $$t(c)=\frac{6c}{5+c}.$$
\end{theorem}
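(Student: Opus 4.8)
The plan is to present $M^{\GIT}_4(t)$ as a variation-of-GIT (VGIT) quotient, read off the walls from the Hilbert--Mumford numerical criterion, and then pin down the change of variables $t(c)$ by a CM line bundle computation. Write $W = H^0(\pr^4,\O_{\pr^4}(2))$, so $\dim W = 15$, and consider the linear action of $G = \SL(5)$ on $\pr\operatorname{Gr}(2,W)\times \pr H^0(\pr^4,\O_{\pr^4}(1))$, linearised on $\O(a,b)$ with $t = b/(a+b)\in(0,1)$. By the numerical criterion it suffices to test one-parameter subgroups $\lambda = \diag(\lambda_0,\ldots,\lambda_4)$ with $\lambda_0\geq\cdots\geq\lambda_4$ and $\sum_i\lambda_i=0$. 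For a pair $(\Pi,H)$, where $\Pi = \langle Q_1,Q_2\rangle$ is a pencil of quadrics and $H$ a hyperplane, the Mumford weight $\mu^{\O(a,b)}((\Pi,H),\lambda)$ is affine-linear in $t$: its $t$-independent part is the Plücker weight of $\Pi$, and its $t$-linear part is the weight of $H$. The (semi)stability of $(\Pi,H)$ is thus controlled by a finite family of inequalities of the shape $A + Bt \geq 0$ with $A,B$ explicit rationals depending on $\lambda$ and on the supported monomials.

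First I would reduce the quadric factor to a tractable combinatorial datum. A generic pencil of quadrics in $\pr^4$ can be simultaneously diagonalised, and its GIT behaviour is governed by the configuration of its singular members, i.e.\ by the binary quintic cutting out the five degenerate quadrics in $\Pi\cong\pr^1$, with the non-generic strata recorded by the Segre symbol. Up to the $\SL(5)$-action this identifies the quadric part of the problem with the classical GIT of binary quintics, whose stability is completely understood. I would then enumerate, stratum by stratum, the normal forms of $\Pi$ together with the position of $H$ relative to the base locus $X_2 = Q_1\cap Q_2$, and for each configuration record the destabilising $\lambda$ and the corresponding inequality $A + Bt \geq 0$.

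The walls are exactly the finitely many $t$-values at which one of these inequalities degenerates, that is, where some configuration with a one-parameter stabiliser crosses from the unstable to the semistable side; equivalently, where the strictly polystable locus jumps. Collecting the critical ratios $-A/B$ over all destabilising configurations and discarding those that are never active yields the candidate walls. I would then verify, for each of $t_1 = \tfrac{1}{6},\ldots,t_5 = \tfrac{2}{3}$, that there is a genuine strictly semistable pair $(\Pi_i,H_i)$ with a $\mG_m$ in its stabiliser appearing precisely at that value, and that the (semi)stable locus is constant on each open chamber in between; the endpoints $t_0 = 0$ and $t_6 = 1$ correspond to the degenerate linearisations forgetting one of the two factors. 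The main obstacle is this combinatorial heart of the VGIT computation: organising the normal forms of pencils of quadrics in $\pr^4$ paired with a hyperplane, and confirming that no destabilising configuration has been overlooked, so that the wall list is genuinely complete. The reduction to binary quintics controls the quadric part, but threading the hyperplane weight and the parameter $t$ through every stratum, and matching each wall to an explicit strictly semistable pair, is the delicate step.

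Finally, to obtain $t(c) = \tfrac{6c}{5+c}$ I would compare the two polarisations. Under the isomorphism $\M^K_{X_2,D_2,c_2}\cong\M^{\GIT}_4(t(c_2))$ the K-moduli (log-CM) $\Q$-line bundle must be a positive multiple of the GIT polarisation $\O(a,b)$. Computing the CM line bundle of the universal family of pairs $(X_2,c_2 D_2)$, with $X_2$ a degree-$4$ del Pezzo and $D_2\sim -K_{X_2}$ a hyperplane section, via the intersection-theoretic formula for $\Ch_1$ of the Knudsen--Mumford expansion, expresses the ratio $b/(a+b)$ as a linear-fractional function of $c$; normalising gives $t(c) = \tfrac{6c}{5+c}$, the constants $5$ and $6$ arising from the degree and dimension data of the polarised pair. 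Transporting the chamber decomposition through this change of variables then yields the walls of $\M^K_{X_2,D_2,c_2}$ in terms of $c$.
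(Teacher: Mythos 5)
This statement is not proved in the paper at all: it is recalled verbatim from \cite[\S 6.3, Theorem 6.20, Lemma 7.5]{pap22} (see also \cite{me_junyan_jesus}), so your sketch can only be measured against those cited works. In outline, your strategy does match theirs: $M^{\GIT}_4(t)$ is treated as a VGIT quotient of $\operatorname{Gr}\big(2,H^0(\pr^4,\O_{\pr^4}(2))\big)\times \pr H^0(\pr^4,\O_{\pr^4}(1))$ by $\SL(5)$ with linearisation $\O(a,b)$ and $t=b/(a+b)$; the walls are the finitely many critical values of Hilbert--Mumford weights that are affine-linear in $t$, each witnessed by a strictly polystable pair with a $\mG_m$ in its stabiliser; and the change of variables $t(c)$ comes from matching the log CM line bundle of the universal family of pairs $(X_2,cD_2)$ against the GIT polarisation. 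One can also check your framework is at least consistent with the claim, since $t(c)=6c/(5+c)$ carries the $c$-walls $1/7,\,1/4,\,1/3,\,1/2,\,5/8$ exactly onto $1/6,\,2/7,\,3/8,\,6/11,\,2/3$.

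The genuine gap is that the proposal never produces the content of the theorem: the five interior wall values and the constants $5$ and $6$ in $t(c)$ are nowhere derived. What you call ``the delicate step''---a provably complete enumeration of destabilising configurations of a pencil of quadrics together with a hyperplane, as $t$ varies---is not a detail that can be deferred; it \emph{is} the theorem, and it is exactly what \cite{pap22} carries out, there by a systematic computational-GIT classification of maximal destabilising one-parameter subgroups and their monomial supports (in the style of \cite{Gallardo_2020}), not by your proposed reduction to binary quintics. That reduction is also the weakest link in your plan: the dictionary between pencils of quadrics and their discriminant quintics is reliable only on the simultaneously diagonalisable strata, degenerates precisely on the non-generic Segre symbols where strictly semistable behaviour lives, and in any case governs only the pencil factor, whereas the walls are created by the interaction of the hyperplane weight with the pencil weight. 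Likewise, the CM-line-bundle computation that is supposed to yield $t(c)=6c/(5+c)$ is asserted rather than performed (and, in the cited works, establishing that the K-moduli and VGIT wall structures correspond under this map is a theorem in its own right, Theorem \ref{thm: junyan-me-jesus}). So: right machinery and correct general shape, but none of the claimed numerical values is actually established.
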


The walls of $\M^K_{X_2,D_2,c_2}$ in terms of $c_2$ are 
$$c_{2,0} = 0,\quad c_{2,1} =\frac{1}{7},\quad c_{2,2} =\frac{1}{4},\quad c_{2,3} =\frac{1}{3},\quad c_{2,4} =\frac{1}{2},\quad c_{2,5} =\frac{5}{8}, \quad c_{2,6} =1.$$

\begin{theorem}[{\cite[Theorem 1.1]{me_junyan_jesus}}]\label{thm: junyan-me-jesus}
    Let $c\in(0,1)$ be a rational number and $d=3,4$. Let $t(c)=\frac{9c}{8+c}$ if $d=3$, and $t(c)=\frac{6c}{5+c}$ if $d=4$. There is an isomorphism between Artin stacks $${\M}^{K}_d(c)\simeq {\M}^{\GIT}_d(t(c)),$$ which descends to an isomorphism ${M}^{K}_d(c)\simeq {M}^{\GIT}_d(t(c))$ between the corresponding good moduli spaces. In particular, these isomorphisms commute with the wall-crossing morphisms.
\end{theorem}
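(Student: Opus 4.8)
The plan is to realise both sides of the claimed isomorphism as quotients of a common $\PGL$-parameter space and to match K-stability with GIT stability through the CM line bundle, following the comparison strategy of Odaka--Spotti--Sun and Ascher--DeVleming--Liu. First I would fix anticanonical models: a (possibly degenerate) del Pezzo surface $X$ of degree $d$ has $(-K_X)^2=d$ and $h^0(-K_X)=d+1$, so $-K_X$ embeds $X$ into $\pr^d$, realising it for $d=3$ as a cubic surface in $\pr^3$ and for $d=4$ as a complete intersection of two quadrics in $\pr^4$, while an anticanonical section $D\sim -K_X$ is cut by a hyperplane $H$. A pair $(X,cD)$ then determines a point of the parameter space underlying $\M^{\GIT}_3(t)$ or $\M^{\GIT}_4(t)$, each equipped with its universal family, $\PGL$-action, and linearisation $\L_{X_i,t}$.

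Next I would build the comparison morphism in both directions. In one direction, I would show that the universal family over the GIT-semistable locus is a family of K-semistable log Fano pairs once $c$ lies in the chamber matching $t$, which yields a morphism $\M^{\GIT}_d(t(c))\to\M^K_d(c)$; the key input is that a GIT-semistable cubic (respectively $(2,2)$-complete intersection) together with its hyperplane is a klt log Fano pair that is K-semistable, which I would verify from the explicit classification of GIT-(semi)stable such pairs combined with valuative estimates for the stability threshold. In the other direction, any K-semistable pair in $\M^K_d(c)$ has $X$ a klt del Pezzo of degree $d$, hence anticanonically embedded into the parameter space and landing in the GIT-semistable locus, which produces the inverse. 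Both directions rely on openness of the relevant loci, the stacks being of finite type (Theorem \ref{K-moduli theorem}), and equivariant gluing of the families.

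The central computation, and the origin of the change of variables, is the identification of the CM $\Q$-line bundle of the universal family $(\X,c\D)$ with the GIT linearisation. Using $\D\sim_{\Q}-K_{\X}$ and the defining Futaki/CM data, I would express the CM bundle as an explicit combination of the tautological ``surface'' and ``hyperplane'' bundles and show it is proportional to $\L_{X_i,t(c)}$ exactly when $t=\tfrac{9c}{8+c}$ for $d=3$ and $t=\tfrac{6c}{5+c}$ for $d=4$. Since the CM line bundle descends to an ample line bundle on $M^K_d(c)$ \cite{codogni, xu-zhuang} and the GIT good moduli space is $\Proj$ of the section ring of $\L_{X_i,t}$, matching semistable loci together with proportionality of polarisations identifies the two good moduli spaces; the stack isomorphism follows since both are good moduli stacks over these identified spaces with matching automorphism groups. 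The wall-crossing compatibility is then immediate: $t(c)$ is a strictly increasing bijection of $(0,1)$, so it sends K-moduli walls to GIT walls and the isomorphism intertwines the wall-crossing morphisms by construction.

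The hard part will be the two-sided stability comparison across \emph{all} chambers, and especially at the walls where strictly semistable pairs occur: there one must check that S-equivalence on the K-side corresponds exactly to the identification of GIT-semistable orbits with closed orbits in the GIT boundary, and that CM-ampleness holds uniformly in $c$ so that the $\Proj$ identification is valid on every chamber and across every wall.
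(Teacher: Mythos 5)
This statement is not proved in the paper at all: it is quoted verbatim from \cite[Theorem 1.1]{me_junyan_jesus} and placed in the section on \emph{prior} results (Section \ref{sec: prior results}), so there is no internal proof to compare your attempt against; the proof lives in the cited reference, which follows the Ascher--DeVleming--Liu/Odaka--Spotti--Sun comparison framework that your sketch also gestures at. Judged on its own terms, however, your proposal has two genuine gaps. First, the two-sided stability comparison --- GIT-semistable implies K-semistable in the matching chamber, and conversely --- \emph{is} the content of the theorem, and you essentially assert it (``which I would verify from the explicit classification \dots combined with valuative estimates''). Direct valuative estimates for all GIT-semistable degenerate pairs, across all chambers and walls, are not a routine verification; the way this is actually handled in the literature is to establish the isomorphism in the first chamber (for $c$ near $0$, where K-semistability of $(X,cD)$ reduces via Mabuchi--Mukai and Odaka--Spotti--Sun to the classical GIT of cubics, resp.\ $(2,2)$-complete intersections), and then propagate it across walls by identifying the wall-crossing structures on both sides. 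Your sketch sets up neither the base-case isomorphism nor the propagation mechanism, and without one of them the ``inverse direction'' claim that every K-semistable pair lands in the GIT-semistable locus is unsupported.

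Second, your passage from moduli spaces back to stacks is logically insufficient: from ``matching semistable loci together with proportionality of polarisations identifies the two good moduli spaces'' and ``matching automorphism groups'' one cannot conclude that the stacks are isomorphic. A morphism of Artin stacks that induces an isomorphism of good moduli spaces and isomorphisms of stabilisers at closed points need not be an isomorphism; one must show that the comparison morphism itself is representable, separated, quasi-finite, sends closed points to closed points, and is finite on good moduli spaces, and then invoke \cite[Proposition 6.4]{Alper_good} together with Zariski's main theorem for stacks --- exactly the machinery this paper deploys for its own Theorem \ref{main_thm}. Relatedly, the wall-crossing compatibility is not ``immediate'' from monotonicity of $t(c)$: knowing that $t(c)$ carries K-walls bijectively to GIT-walls does not by itself show that the chamber isomorphisms intertwine the reduction morphisms at each wall; that requires an argument at the level of families over the wall, which is why the cited theorem states it as an additional conclusion rather than a formal consequence.
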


\subsubsection{Wall crossings for Fano threefolds of high Picard rank}
Let $X_d = X\times Y_{d}$, where $4\leq d\leq 1$ is the degree of the del Pezzo surface $Y_d$, and consider now log Fano pairs $(X_d, cD)$, where $D\sim -K_{X_d}$. Here, $D = p_1^*(-K_{\pr^1}) - p_2^*(-K_{Y_{d}})$. Let $\M^K_{X_d,c}$ be the K-moduli stack parametrising K-semistable log Fano pairs $(X_d, cD)$. Let $D_1 \sim -K_{\pr^1}$ and $H_d\sim -K_{Y_d}$. As before, let $\M^K_{d, c}$ be the K-moduli stack parametrising K-semistable log Fano pairs $(Y_d, cH_d)$, and let $\M^K_{\pr^1,c}$ be the K-moduli stack parametrising K-semistable log Fano pairs $(\pr^1,D_1)$. Notice that for all $0\leq c<1$, the K-moduli stack $\M^K_{\pr^1,c}\cong \{\mathrm{pt}\}$, as the log Fano pair $(\pr^1,D_1)$ is K-semistable for all $c$, and $\pr^1$ doesn't admit any $\Q$-Gorenstein degenerations. We also denote by $\M^K_{\pr^2}(c)$ the moduli stack of log Fano pairs with a $\Q$-Gorenstein smoothing to pairs $(\mathbb P^2, \frac{1}{2}C+cC')$, where $C$ is a quartic curve and $C'$ is a line in $\pr^2$, and its good moduli space by ${M}^K_{\pr^2}(c)$. Let also 
$$\M^{\GIT}_{\mathbb P^2}(t)\coloneqq\left[\left(\mathbb P H^0\left(\mathbb P^2, \mathcal O_{\mathbb P^2}(4)\right)\times\mathbb PH^0\left(\mathbb P^2, \mathcal O_{\mathbb P^2}(1)\right)\right)^{ss}/_{\cO(a,b)} \PGL(3)\right],$$ 
be the GIT moduli stack parametrising GIT semistable pairs $(C,C')$ in $\pr^2$, with corresponding good moduli space $M^{\GIT}_{\mathbb P^2}(t)$. By \cite[Theorem 1.2]{me_junyan_jesus}, the stack $\M^K_{\pr^2}(c)$ is the quotient stack obtained by taking the Kirwan blow-up $\left[{\U}^{ss}_{t(c)}/\PGL(3)\right]$ of the VGIT moduli stack $\M^{\GIT}_{\pr^2}(t(c))$ along the locus parameterizing the pair $(2C,L)$, where $C$ is a smooth conic and $L$ is a line intersecting $C$ transversely.

We then obtain the following wall-crossing picture for K-moduli of log Fano pairs $(X_d, cD)$.

\begin{proposition}\label{fano_threefolds_wall_crossing}
    We have $\M^K_{X_d,c}\cong \M^K_{d,c}$. Furthermore, we have 
    \begin{enumerate}
        \item $\M^K_{X_4, c}\cong \M^{\GIT}_4(t(c))$;
        \item $\M^K_{X_3, c}\cong \M^{\GIT}_3(t(c))$;
        \item $M^K_{X_2,c} \cong {\U}^{ss}_t\sslash_{\O(a,b)}\PGL(3)$.
    \end{enumerate}
\end{proposition}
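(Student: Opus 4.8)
The plan is to reduce the statement for the Fano threefolds $X_d = \pr^1 \times Y_d$ entirely to the corresponding statements for the del Pezzo surfaces $Y_d$, using the product structure and the fact that the $\pr^1$ factor contributes nothing moduli-theoretically. First I would observe that the divisor decomposes as $D = p_1^*(-K_{\pr^1}) + p_2^*(-K_{Y_d})$, so that the log Fano pair $(X_d, cD)$ is precisely the product pair $(\pr^1, cD_1) \times (Y_d, cH_d)$ in the sense of Theorem \ref{product thm log pairs}, with $\mathbf{c}D = cD_1 \boxtimes cH_d$. Since the two factors are clearly non-isomorphic (they have different dimensions, so $(\pr^1, cD_1) \not\cong (Y_d, cH_d)$), Theorem \ref{product thm log pairs} applies directly and yields
$$
\M^K_{X_d, c} \cong \M^K_{\pr^1, c} \times \M^K_{d, c}.
$$

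Next I would invoke the already-noted fact that $\M^K_{\pr^1, c} \cong \{\mathrm{pt}\}$ for all $0 \leq c < 1$: the pair $(\pr^1, D_1)$ is K-semistable for every such $c$ and $\pr^1$ admits no nontrivial $\Q$-Gorenstein degenerations. Therefore the product with a point is trivial, giving the first assertion $\M^K_{X_d, c} \cong \M^K_{d, c}$, and likewise $M^K_{X_d, c} \cong M^K_{d, c}$ on good moduli spaces by the descent part of Theorem \ref{product thm log pairs}. This is the structural heart of the proposition; the three enumerated cases are then just bookkeeping.

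For the explicit identifications in parts (1)--(3), I would simply cite the prior wall-crossing results recalled in Section \ref{sec: prior results}. For $d = 4$, the isomorphism $\M^K_{4, c} \cong \M^{\GIT}_4(t(c))$ with $t(c) = \tfrac{6c}{5+c}$ is Theorem \ref{thm: junyan-me-jesus} (combined with Theorem \ref{my_work}); for $d = 3$, the isomorphism $\M^K_{3, c} \cong \M^{\GIT}_3(t(c))$ with $t(c) = \tfrac{9c}{8+c}$ is again Theorem \ref{thm: junyan-me-jesus} (with Theorem \ref{jesus+patricio}). Composing these with the isomorphism $\M^K_{X_d, c} \cong \M^K_{d, c}$ from the first step gives parts (1) and (2) immediately. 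For $d = 2$ I would use the description of $\M^K_{\pr^2}(c)$ as the Kirwan blow-up $[\,\U^{ss}_{t(c)}/\PGL(3)\,]$ of $\M^{\GIT}_{\pr^2}(t(c))$ recalled just before the statement (from \cite[Theorem 1.2]{me_junyan_jesus}), whose good moduli space is $\U^{ss}_t \sslash_{\O(a,b)} \PGL(3)$, yielding part (3).

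The only genuine point requiring care — and the main obstacle, though a mild one — is verifying that the hypotheses of Theorem \ref{product thm log pairs} are met, specifically that the product divisor $\mathbf{c}D = cD_1 \boxtimes cH_d$ has the same coefficient $c$ on both factors and satisfies $D \sim -K_{X_d}$, matching the setup of the log pair being studied. I would check that $-K_{X_d} = p_1^*(-K_{\pr^1}) + p_2^*(-K_{Y_d})$ by the K\"unneth/adjunction formula for products, so that $D \sim -K_{X_d}$ decomposes correctly and the coefficient $c$ is shared, placing us squarely in the hypotheses of Theorem \ref{product thm log pairs} and Remark \ref{extension to pairs is trivial}. Once this compatibility is confirmed, the proof is a direct application of the product isomorphism followed by the cited explicit descriptions, exactly as the author's one-line proofs of Propositions \ref{fano_threefolds_ex} and \ref{fano_threefolds_wall_crossing} suggest.
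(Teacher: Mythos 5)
Your proposal is correct and follows essentially the same route as the paper, whose proof is a one-line citation of Theorems \ref{jesus+patricio}, \ref{my_work}, \ref{product thm log pairs} and \cite[Theorem 1.2]{me_junyan_jesus}: you apply the log-pair product theorem with the factor $(\pr^1, cD_1)$ (non-isomorphic to $(Y_d, cH_d)$ by dimension), use $\M^K_{\pr^1,c}\cong\{\mathrm{pt}\}$ to collapse the product, and then plug in the prior explicit GIT/Kirwan descriptions for $d=4,3,2$. Your added care about the decomposition $D = p_1^*(-K_{\pr^1}) + p_2^*(-K_{Y_d})$ (with the correct sign, fixing what appears to be a typo in the paper's setup) is a sound, if routine, verification that the paper leaves implicit.
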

\begin{proof}
    The proof follows directly from Theorems \ref{jesus+patricio}, \ref{my_work} and \ref{product thm log pairs}, and \cite[Theorem 1.2]{me_junyan_jesus}.
\end{proof}

% \begin{remark}
%     Proposition \ref{fano_threefolds_wall_crossing} gives an explicit description for all the K-moduli wall crossings, and the wall-chamber decompositions for all log Fano pairs $(X, cD)$, where $D\sim -K_X$ and $X$ is a Fano threefold in families \textnumero 7.1, 8.1, 9.1.
% \end{remark}

% \subsection{K-moduli wall crossings for Fano threefolds}

\subsubsection{Polyhedral wall crossings for higher dimensional Fano varieties}

Let $X_1\subset \pr^3$ be a del Pezzo surface of degree $3$, and  $X_2\subset \pr^4$ be a del Pezzo surface of degree $4$. Let $H_1 \cong \pr^2\subset \pr^3$ and $H_2 \cong \pr^3\subset \pr^4$ be two hyperplanes in $\pr^3$ and $\pr^4$, respectively, where $D_i \coloneqq X_i\cap H_i$ are hyperplane sections such that $D_i \sim -K_{X_i}$. The pairs $(X_i,c_iD_i)$ are log Fano pairs for $0\leq c_i<1$, with K-moduli stacks $\M^K_{X_i,D_i,c_i}$ which have been explicitly described in Theorems \ref{jesus+patricio} and \ref{my_work} for all walls and chambers. As before, these have been described in terms of the VGIT quotient stacks $\M^{\GIT}_3(t(c_1))$ and $\M^{\GIT}_4(t(c_2))$, where 
 $$\M^{\GIT}_3(t(c_1)) = [(\pr H^0(\pr^3,\O_{\pr^3}(3))\times \pr H^0(\pr^3,\O_{\pr^3}(1)))^{ss}/ \PGL(4)]$$
is the GIT moduli stack parametrising pairs $(X_1,H_1)$, of cubics and hyperplanes in $\pr^3$, with respect to a polarisation $\L_{X_1, t(c_1)}$, and similarly 
$$\M^{\GIT}_4(t(c_2)) = [( \operatorname{Gr}(2, H^0(\pr^4,\O_{\pr^4}(2)))\times \pr H^0(\pr^4,\O_{\pr^4}(1)))^{ss}/ \PGL(5)]$$
is the GIT moduli stack parametrising pairs $(X_2,H_2)$, of complete intersections of two quadrics and hyperplanes in $\pr^4$, with respect to a polarisation $\L_{X_2, t(c_2)}$.

Let $X = X_1\times X_2$ and $\mathbf{c}D = c_1D_1\boxtimes c_2D_2 = c_1p_1^*D_1+c_2p_2^*D_2$, where $p_i\colon X\rightarrow X_i$ are the two corresponding projections. The pair $(X, \mathbf{c}D)$ is a log Fano pair for $0<c_1$, $c_2<1$, with $p_1^*D_1+p_2^*D_2\sim -K_X$.
% Combining these results with Theorem \ref{main_thm}, we obtain the following result. 
Let $\M^K_{X, \mathbf{c}D}$ be the K-moduli stack parametrising K-semistable log Fano pairs $(X, \mathbf{c}D)$, with good moduli space $M^K_{X, \mathbf{c}D}$ as in Sections \ref{sec:prelims} and \ref{sec:product map}. We will combine the results of Theorems \ref{jesus+patricio}, \ref{my_work} and \ref{thm: junyan-me-jesus} and \cite{me_junyan_jesus}, with Theorem \ref{product thm log pairs} to obtain a full characterisation of the moduli stack $\M^K_{X, \mathbf{c}D}$ and its polyhedral wall-crossings, in line with \cite{zhou2023shape}.

\begin{theorem}\label{product_example for pairs}
For all $c_1$, $c_2$, $\mathbf{c} = (c_1,c_2)$ we have
    $$\M^K_{X, \mathbf{c}D}\cong \M^K_{X_1,D_1,c_1}\times \M^K_{X_2,D_2,c_2}\cong \M^{\GIT}_3(t(c_1))\times \M^{\GIT}_4(t(c_2))$$
    and
    $$M^K_{X, \mathbf{c}D}\cong M^K_{X_1,D_1,c_1}\times M^K_{X_2,D_2,c_2}\cong M^{\GIT}_3(t(c_1))\times M^{\GIT}_4(t(c_2)).$$
    In particular, there are 36 total wall chamber decompositions, as illustrated in Figure \ref{fig:wall-chamber}.
\end{theorem}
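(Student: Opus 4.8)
The plan is to deduce this statement directly from the general product isomorphism for log Fano pairs together with the explicit identifications of the two factors. First I would verify the hypothesis of Theorem \ref{product thm log pairs}. Here $X_1$ is a cubic surface and $X_2$ is a degree four del Pezzo surface, so $(-K_{X_1})^2 = 3 \neq 4 = (-K_{X_2})^2$; in particular $X_1 \not\cong X_2$, and hence $(X_1, c_1 D_1) \not\cong (X_2, c_2 D_2)$ for every choice of $c_1$, $c_2$. Theorem \ref{product thm log pairs} therefore applies verbatim and yields the first isomorphism $\M^K_{X, \mathbf{c}D} \cong \M^K_{X_1, D_1, c_1} \times \M^K_{X_2, D_2, c_2}$, which descends to the corresponding isomorphism of good moduli spaces.

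For the second isomorphism I would apply Theorem \ref{thm: junyan-me-jesus} to each factor separately: with $d = 3$ it gives $\M^K_{X_1, D_1, c_1} \cong \M^{\GIT}_3(t(c_1))$, and with $d = 4$ it gives $\M^K_{X_2, D_2, c_2} \cong \M^{\GIT}_4(t(c_2))$, for the respective functions $t(c)$ recorded in Theorems \ref{jesus+patricio} and \ref{my_work}. Taking the product of these two isomorphisms and composing with the one from the previous step gives the claimed identification on the level of stacks, and the same argument applied to the descended maps (again using that Theorem \ref{thm: junyan-me-jesus} commutes with passage to good moduli spaces) gives it on the level of good moduli spaces.

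It remains to count the chambers. Theorem \ref{jesus+patricio} records six walls $t_0 < \cdots < t_6$ for $M^{\GIT}_3$, equivalently the six walls $c_{1,0} < \cdots < c_{1,6}$ in terms of $c_1$, which cut $(0,1)$ into six open chambers; Theorem \ref{my_work} does the same for the second factor. The decisive point is that the isomorphism of the first step holds for \emph{every} pair $(c_1,c_2)$, so the stack $\M^K_{X, \mathbf{c}D}$ can only change as $\mathbf{c}$ varies when one of the two factors changes, i.e.\ when $c_1$ or $c_2$ crosses one of its own walls. Consequently the wall-chamber decomposition of the parameter square is exactly the product of the two one-parameter decompositions: a rectangular grid whose walls are the vertical lines $c_1 = c_{1,k}$ and the horizontal lines $c_2 = c_{2,l}$, with $6 \times 6 = 36$ open chambers, as in Figure \ref{fig:wall-chamber}. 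The main point to be careful about is precisely this factorisation --- that no ``diagonal'' walls arise from an interaction between the two boundary coefficients --- and it is guaranteed by the fact that Theorem \ref{product thm log pairs} produces an isomorphism of stacks uniformly in $(c_1, c_2)$, so there is no coupling between the two parameters.
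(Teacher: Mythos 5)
Your proposal is correct and follows essentially the same route as the paper, which states this theorem without a separate proof precisely because it is the direct combination of Theorem \ref{product thm log pairs} (applicable since $X_1\not\cong X_2$, the degrees being $3$ and $4$) with the identifications in Theorems \ref{jesus+patricio}, \ref{my_work} and \ref{thm: junyan-me-jesus}, the rectangular chamber structure following exactly as you argue from the fact that the product isomorphism holds uniformly in $(c_1,c_2)$. One cosmetic slip: the values $t_0<\cdots<t_6$ are seven, of which only $t_1,\dots,t_5$ lie in the open interval $(0,1)$; these five interior walls cut each factor into six chambers, so your count of $6\times 6=36$ chambers is nevertheless correct.
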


\begin{figure}[h!]
    \centering
    \begin{tikzpicture}
\begin{axis}[
    xlabel={$c_1$},
    ylabel={$c_2$},
    xmin=0, xmax=1.2,
    ymin=0, ymax=1.2,
    xtick={0,0,0,0,0,0,0,2/11,4/13,2/5,10/19,2/3,1},
    ytick={0,1/7,1/4,1/3,1/2,5/8,1},
    axis lines=middle,
    width=10cm,
    height=10cm
    % xticklabel style={
    %     /pgf/number format/frac,
    %     /pgf/number format/frac whole=false,
    %     /pgf/number format/frac denom=11
    % },
    % yticklabel style={
    %     /pgf/number format/frac,
    %     /pgf/number format/frac whole=false,
    %     /pgf/number format/frac denom=7
    % }
]
% Points
\addplot[only marks, mark=*] coordinates {
    (0,0) (0,1/7) (0,1/4) (0,1/3) (0,1/2) (0,5/8) (0,1) 
    (2/11,0) (4/13,0) (2/5,0) (10/19,0) (2/3,0) (1,0)
};
% Vertical lines

\addplot[color=black] coordinates {
(0,1/7) (1,1/7)
};

\addplot[color=black] coordinates {
 (0,1/4) (1,1/4)
};
\addplot[color=black] coordinates {
(0,1/3) (1,1/3)
};
\addplot[color=black] coordinates {
(0,1/2) (1,1/2)
};
\addplot[color=black] coordinates {
(0,5/8) (1,5/8)
};
\addplot[color=black] coordinates { 
    (0,1)   (1,1)
};
% Horizontal lines
% \addplot[dashed] coordinates {
%     (2/11,0) (2/11,1)
%     (4/13,0) (4/13,1)
%     (2/5,0) (2/5,1)
%     (10/19,0) (10/19,1)
%     (2/3,0) (2/3,1)
%     (1,0)   (1,1)
% };
\addplot[color=black] coordinates {
    (2/11,0) (2/11,1)
};
\addplot[color=black] coordinates {
    (4/13,0) (4/13,1)
};
\addplot[color=black] coordinates {
    (2/5,0) (2/5,1)
};
\addplot[color=black] coordinates {
    (10/19,0) (10/19,1)
};
\addplot[color=black] coordinates {
    (2/3,0) (2/3,1)
};
\addplot[color=black] coordinates {
    (1,0)   (1,1)
};
\end{axis}
\end{tikzpicture}
    \caption{Polyhedral structures of wall-chamber decomposition and wall-crossing for $\M^K_{X,\mathbf{c}D}$.}
    \label{fig:wall-chamber}
\end{figure}
%possible examples: products of dP_i,D with dP_j,D. Products of P^n,cD with some other.

\begin{remark}
    The above example is a representative example of a plethora of possible descriptions of K-moduli wall-crossings one can obtain when relying on previous results, such as \cite{ascher2019wall, ascher2020kmoduli, pap22, me_junyan_jesus}. It should however be noted, that the examples generated using Theorem \ref{product thm log pairs} yield only ``rectangular'' wall-crossings, as in Figure \ref{fig:wall-chamber}.
\end{remark}

% \bibliographystyle{alpha}
% \bibliography{references}
\printbibliography

\end{document}